\definecolor{vert}{rgb}{0,0.6,0}
\numberwithin{figure}{section}
\theoremstyle{plain}
\newtheorem{thm}{Theorem}[section]
\newtheorem{quest}{Question}
\newtheorem{lem}[thm]{Lemma}
\newtheorem{prop}[thm]{Proposition}
\theoremstyle{remark}
\newtheorem{rem}{\bf{Remark}}
\numberwithin{equation}{section}
\newcommand{\N}{\mathbb{N}}
\newcommand{\R}{\mathbb{R}}
\newcommand{\T}{\mathbb{T}}
\newcommand{\Z}{\mathbb{Z}}
\newcommand{\cM}{\mathcal{M}}
\newcommand{\cP}{\mathcal{P}}
\newcommand{\cE}{\mathcal{E}}
\newcommand{\Li}{L^{\infty}}
\newcommand{\al}{\alpha}
\newcommand{\gam}{\gamma}
\newcommand{\del}{\delta}
\newcommand{\ep}{\varepsilon}
\newcommand{\lam}{\lambda}
\newcommand{\Del}{\Delta}
\newcommand{\ol}{\overline}
\newcommand{\supp}{{\rm supp}\,}
\newcommand{\Div}{{\rm div}\,}
\newcommand{\osc}{{\rm osc}\,}
\begin{document}

\title[Discounted Hamilton--Jacobi equations]
{The Selection problem for\\ 
discounted Hamilton-Jacobi equations: \\
some non-convex cases}

\thanks{
DG'€™s work was partially supported by
start-up and baseline funding from King Abdullah University of Science and Technology;  
HM's work was partially supported by KAKENHI 15K17574, 26287024, 23244015, 16H03948; 
HT'€™s work was partially supported in part by NSF grant DMS-1615944.
}

\author{Diogo A. Gomes}
\address[D. A. Gomes]{
King Abdullah University of Science and Technology (KAUST), CEMSE Division, Thuwal 23955-6900, Saudi Arabia.}
\email{diogo.gomes@kaust.edu.sa}

\author{Hiroyoshi Mitake}
\address[H. Mitake]{
Institute of Engineering, Division of Electrical, Systems and Mathematical Engineering, 
Hiroshima University 1-4-1 Kagamiyama, Higashi-Hiroshima-shi 739-8527, Japan}
\email{hiroyoshi-mitake@hiroshima-u.ac.jp}

\author{Hung V. Tran}
\address[H. V. Tran]
{
Department of Mathematics, 
University of Wisconsin-Madison, Van Vleck hall, 480 Lincoln drive, Madison, WI 53706, USA}
\email{hung@math.wisc.edu}

\date{\today}
\keywords{Nonconvex Hamilton-Jacobi equations; Discounted approximation; Ergodic problems; Nonlinear adjoint methods}
\subjclass[2010]{
35B40, 
37J50, 
49L25 
}

\maketitle

\begin{abstract}
Here, we study the selection problem for the vanishing discount approximation of non-convex, first-order Hamilton-Jacobi equations. While the selection problem is well understood for convex Hamiltonians, the selection problem for non-convex Hamiltonians has thus far not been studied.
We begin our study by examining a 
generalized discounted Hamilton-Jacobi equation.  Next, using an exponential transformation, we apply our methods to 
strictly quasi-convex and to some non-convex Hamilton-Jacobi equations. 
Finally, we examine a non-convex Hamiltonian with flat parts to which our results do not directly apply. In this case, we establish the convergence by a direct approach. 
\end{abstract}




\section{Introduction}

Let  $\T^n = \R^n / \Z^n$ be the standard $n$-dimensional torus and fix a continuous Hamiltonian, $H:\T^n\times \R^d\to \R$. 
Here, we require $H$\ to be \textit{coercive}; that is, 
\[\ 
\lim_{|p| \to \infty} H(x,p)= \infty, \quad \text{uniformly for} \ x \in\
 \T^n. 
\]
 We do not, however, assume convexity. The \textit{ergodic} Hamilton-Jacobi equation is the partial differential equation (PDE)  
\begin{equation}\label{e}
 H(x,Du)=\overline{H} \qquad\text{in} \ \T^n,  \tag{E}   
\end{equation}
and, for $\ep>0$, the corresponding \textit{discounted problem} is
\begin{equation}
\label{de}
\ep u^\ep +H(x,Du^\ep)=0 \qquad \text{in} \ \T^n. 
\tag{$\rm{D}_\ep$}
\end{equation}
In \eqref{e}, the unknown is a pair, $(u,\overline{H} )\in C(\T^n)\times\R$, whereas in \eqref{de}, the unknown is a function, $u^\ep \in C(\T^n)$. In both \eqref{e} and \eqref{de}, we consider solutions in the viscosity sense. Here, we are interested in the {\em vanishing discount limit},  $\ep\to 0$ in \eqref{de}, and in the characterization of the limit, $u$,  of $u^\ep$ as a particular solution of \eqref{e}.  

The problem \eqref{de}  arises in
optimal control theory and differential
game theory where
 $\ep$ 
 is a discount factor.
Moreover,  \eqref{de}   plays an essential role in  the homogenization of first-order Hamilton-Jacobi
equations. For example, in the study of homogenization in \cite{LPV}, the vanishing discount limit is used to construct solutions to  
the ergodic problem. 
The ergodic problem is sometimes called the \textit{cell problem} or the
\textit{additive eigenvalue problem}. 
The PDE \eqref{de} is also called the \textit{discounted approximation}
of the ergodic  problem.
Properties of the solutions of \eqref{e}  
are relevant in dynamical systems, namely in weak Kolmogorov-Arnold-Moser (KAM) theory (see \cite{FaB}), 
and they have applications in the study of the long-time behavior of Hamilton-Jacobi
equations.

In recent years, there was significant progress in the analysis of non-convex
Hamilton-Jacobi equations. Some remarkable results include the characterization
of the shock structure of the gradient of solutions \cite{Ev1}, construction
of invariant measures
in the spirit of weak KAM theory \cite{CGT1}, 
and homogenization in random media  \cite{ATY1, ATY2} (see also \cite{Gao}).
Better grasp of the vanishing discount problem for non-convex Hamiltonians
is essential to improving
our understanding of the nature of viscosity solutions of Hamilton-Jacobi
equations.

Before we proceed, we recall some elementary properties of  \eqref{e}  and  \eqref{de}. First, there exists a unique real constant, $\overline{H}$, such that \eqref{e} has viscosity 
solutions  \cite{LPV}. This constant is often called the {\em ergodic constant} or the {\em effective Hamiltonian}. However, in general,   \eqref{e} does not have a unique solution, not even up to additive constants. The lack of uniqueness is a central issue in the study of   
the asymptotic behavior of $u^\ep$ as $\ep \to 0$.
As \eqref{de} is strictly 
monotone with respect to $u^\ep$ for $\ep>0$, Perron's method gives the existence of  a unique viscosity solution,
$u^\ep$. By the coercivity of the Hamiltonian, we have that 
\begin{equation}\label{intro:apriori}
\|Du^{\ep}\|_{\Li(\T^n)}\le C \quad \text{for some} \ C>0 \ \text{independent of} \ \ep. 
\end{equation}
We fix  $x_0\in\T^n$. The preceding estimate implies that  
\[
\{u^{\ep}(\cdot)-u^{\ep}(x_0)\}_{\ep>0} 
\]
is uniformly bounded
and equi-Lipschitz continuous in \( \T^n\).
Therefore, by the Arzel\'a-Ascoli theorem, there exists a subsequence, 
$\{\ep_j\}_{j\in\N}$, with $\ep_j\to0$ as $j\to\infty$, and a function, $u\in C(\T^n)$, such that 
\begin{equation}\label{conv:sub}
\ep_{j}u^{\ep_j}\to-\overline{H}, 
\quad u^{\ep_j}-u^{\ep_j}(x_0)\to u\in C(\T^n),   
\end{equation}
uniformly in \( \T^n \) as \( j\to\infty\).
By a standard viscosity solution argument, we see that  $(u,\overline{H})$ solves \eqref{e}. However, the convergence in  \eqref{conv:sub} and the function
 $u$ may depend on the choice of the subsequence $\{\ep_j\}$. Thus,  the limit as $\ep \to 0$ of $u^\ep$ may not exist. 

Our primary goal is to study the \textit{selection
problem} for  \eqref{de}; 
that is, we wish to understand whether or not the limit as $\ep \to 0$ of $u^\ep$ exists  and,
if it does, what the characterization of this limit is. This problem was proposed in \cite{LPV} (see also \cite[Remark 1.2, page 400]{BCD}). It remained unsolved for almost
30 years. Recently, there was substantial progress  in the case of convex
Hamiltonians. First, a partial characterization of the possible limits was
given in  \cite{G2} in terms of the
Mather measures (see, for example, \cite{FaB, Man, M}). 
Then, the convergence to a unique limit and its characterization were established
 in \cite{DFIZ}  using weak KAM theory. 
Further selection problems including the case of degenerate viscous Hamilton-Jacobi
equations were addressed using the
 nonlinear adjoint method in \cite{MT6}.
Finally, an analogous convergence result for the case of  Neumann boundary
conditions was examined in  \cite{AAIY}.
The selection problem for possibly degenerate, fully nonlinear Hamilton-Jacobi-Bellman equations
was considered in \cite{IMT1, IMT2}. A related selection problem  was addressed
in  \cite{B, JKM} and   selection questions motivated
by  finite-difference schemes were examined in \cite{S}. In all these papers, the convexity
of the Hamiltonian was essential and no extensions to non-convex
Hamiltonians were offered. Thus, the
selection problem in the non-convex setting has yet to be studied.


Here, we develop methods to examine the selection problem for \eqref{de} for non-convex Hamilton-Jacobi equations. Our main technical device is a selection theorem for a class of nonlinearly discounted Hamilton-Jacobi equations, Theorem \ref{thm:main}. Although this theorem is of independent interest, we focus here on two main applications: the case of strictly quasiconvex Hamiltonians in Theorem \ref{thm:quasi}  and the case of double-well problems in Theorem  \ref{thm:double-well}. These results and the main assumptions are  stated in the next section. Next, in Section \ref{sec:general}, we introduce a generalized discounted approximation, examine its  convergence, and prove Theorem \ref{thm:main}.  
Our proof  is based on the method introduced in \cite{MT6}. 
Then, in Section \ref{sec:nonconvex}, we study strictly quasi-convex Hamiltonians
and prove Theorem
 \ref{thm:quasi}. 
 Next, in Section \ref{sec:dw}, we consider the double-well 
 Hamiltonian-Jacobi equation and prove Theorem \ref{thm:double-well}.
Finally,  in Section \ref{sec:ex}, we examine the convergence for a quasi-convex Hamiltonian with flat parts. 
The results in this section do not follow from the general theory developed
in Section \ref{sec:nonconvex} and they require a distinct approach.  In this final section, we discuss maximal subsolutions and the Aubry set.
In particular, we provide an answer to Question 12 in the list of open problems \cite{Bo} from the conference ``New connections between dynamical systems and PDEs" at the American Institute of Mathematics in 2003.

\section{Assumptions and main results}

Here, we
discuss the main assumptions used in the paper and present the main results.

Let
$G \in C^1(\T^n \times \R^n)$ and $f\in C^{2}(\T^n\times\R)$ satisfy
\begin{itemize}
\item[(A1)] 
{ 
uniformly for $x\in\T^n$, 
\[
\lim_{|p| \to \infty} \left(\frac{1}{2|p|} G(x,p)^2 - |D_xG(x,p)|\right) = +\infty;
\]}
\item[(A2)] 
$p \mapsto G(x,p)$ is convex; 
\item[(A3)] 
$f_{r}(x,r)>0$ for all $(x,r)\in\T^n\times\R$.
{
There exists $M>0$ such that, for all $x \in \T^n$,
\[
f(x,-M) \leq -G(x,0) \leq f(x,M).
\]
}
\end{itemize}
We consider the following generalization of  the discounted problem
\begin{equation}\label{eq:general}
f(x,\ep v^\ep)+G(x,Dv^\ep)=0 \quad\text{in} \,\, \T^n.
\tag{$\rm{GE}_\ep$} 
\end{equation}
Because of (A3), for  $\ep>0$, the left-hand side of \eqref{eq:general} is
strictly 
monotone in $v^\ep$. Therefore,
\eqref{eq:general} has a comparison principle. Furthermore, the
coercivity of $G$ given by (A1) implies that $\|Dv^\ep\|_{L^\infty}<C$ for
some constant, $C$, independent of $\ep$ (see Lemma \ref{lem:g1} below). 
Thus, arguing as before, we see that
there exists a constant, $c \in \R$, such that 
$\ep v^\ep\to-c$ in $C(\T^n)$ as $\ep\to0$. 
Accordingly, we consider the  following
 ergodic problem associated with \eqref{eq:general}:\begin{equation}
\label{ge}
 f(x,-c)+G(x,Dv)=0\quad\text{in} \,\, \T^n. 
\tag{GE}
\end{equation}
Without loss of generality, by replacing $f$ by $f_c(x,r)=f(x,-c+r),$ if
necessary, 
we can assume that $c=0$.

\begin{thm}\label{thm:main}
Assume {\rm(A1)--(A3)} hold. Let $v^\ep$ be the viscosity solution 
of \eqref{eq:general}.
Let $\cM$ be given by  \eqref{Mather-general}. Let 
 $\cE$ be the family of subsolutions $w$ of \eqref{ge} that satisfy\begin{equation}\label{class-sub}
\iint_{\T^n \times \R^n} f_r(x,0)w(x)
 \,d\mu \leq 0 \qquad \text{for all} \ \mu \in \cM.    
\end{equation}
Define
\[
v^0(x)=\sup_{w \in \cE} w(x). 
\]

Then, we have 
\begin{equation}\label{main:conv}
v^\ep(x) \to v^0(x), \quad\text{uniformly for} \ x\in\T^n \
\text{as} \ \ep\to0.   
\end{equation}

\end{thm}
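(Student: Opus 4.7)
The strategy is to adapt the nonlinear adjoint method of \cite{MT6} to the generalized discounted equation \eqref{eq:general}. The coercivity (A1) yields a uniform Lipschitz bound on $v^\ep$; the strict monotonicity (A3) supplies a comparison principle, which combined with the normalization $c=0$ bounds $\|v^\ep\|_{L^\infty(\T^n)}$ uniformly in $\ep$ (by comparing $v^\ep$ with suitably translated solutions of \eqref{ge}). Consequently $\{v^\ep\}_{\ep>0}$ is precompact in $C(\T^n)$, and any subsequential limit $v$ is a viscosity solution of \eqref{ge}. It therefore suffices to prove, for every such $v$ and every $x_0 \in \T^n$, both $v(x_0) \ge v^0(x_0)$ and $v(x_0) \le v^0(x_0)$.

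To extract Mather measures in this non-variational setting, I regularize \eqref{eq:general} by a vanishing viscosity term $-\delta\Delta v^{\ep,\delta}$ and, for each fixed $x_0\in\T^n$, introduce the linearized adjoint $\sigma^{\ep,\delta}\ge 0$ as the distributional solution of
\begin{equation*}
-\delta\Delta\sigma^{\ep,\delta} - \Div\bigl(\sigma^{\ep,\delta}\,D_pG(x,Dv^{\ep,\delta})\bigr) + \ep f_r(x,\ep v^{\ep,\delta})\sigma^{\ep,\delta} = f_r(x,\ep v^{\ep,\delta})\delta_{x_0}.
\end{equation*}
Integrating over $\T^n$ gives $\ep\int_{\T^n} f_r(x,\ep v^{\ep,\delta})\sigma^{\ep,\delta}\,dx = f_r(x_0,\ep v^{\ep,\delta}(x_0))$, so $\ep f_r\,\sigma^{\ep,\delta}\,dx$ has uniformly bounded total mass. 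Sending first $\delta\to 0$ and then $\ep\to 0$ along a subsequence, the pushforwards of these measures to $\T^n\times\R^n$ via $x\mapsto(x,Dv^{\ep,\delta}(x))$ converge weakly to some $\mu\in\cM$ identifiable with the intrinsic class \eqref{Mather-general}.

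For the lower bound $v\ge v^0$, fix $w\in\cE$, which I may assume is $C^1$ after Fathi's regularization. The convexity of $G(x,\cdot)$ in (A2) applied to the subsolution inequality $G(x,Dw)+f(x,0)\le 0$ combined with the regularized equation for $v^{\ep,\delta}$ yields the pointwise bound
\begin{equation*}
D_pG(x,Dv^{\ep,\delta})\cdot D(w-v^{\ep,\delta}) \le f(x,\ep v^{\ep,\delta}) - f(x,0) - \delta\Delta v^{\ep,\delta}.
\end{equation*}
Multiplying by $\sigma^{\ep,\delta}$, integrating by parts through the adjoint equation, and Taylor-expanding $f(x,\ep v^{\ep,\delta})-f(x,0) = \ep f_r(x,0)v^{\ep,\delta} + O(\ep^2)$ produces
\begin{equation*}
f_r(x_0,\ep v^{\ep,\delta}(x_0))\,\bigl(w(x_0) - v^{\ep,\delta}(x_0)\bigr) \le \ep\int_{\T^n} f_r(x,0)\,w(x)\,\sigma^{\ep,\delta}\,dx + o(1).
\end{equation*}
Passing $\delta\to 0$ and then $\ep\to 0$ and invoking the defining inequality $\iint f_r(x,0)\,w\,d\mu\le 0$ of $\cE$ yields $w(x_0)\le v(x_0)$; taking the supremum over $w\in\cE$ gives $v^0\le v$.

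For the upper bound $v\le v^0$, I repeat the adjoint manipulation with $v^{\ep,\delta}$ playing the role of $w$: since $v$ itself is a subsolution of \eqref{ge} by viscosity stability, the analogous convexity inequality and adjoint identity deliver, in the limit, $\iint f_r(x,0)\,v\,d\mu\le 0$ for the Mather measure $\mu$ generated from $x_0$, so $v\in\cE$ and hence $v\le v^0$. The principal obstacle is the identification of the adjoint-generated limits with the intrinsic class $\cM$ of \eqref{Mather-general} and verifying that the constraint \eqref{class-sub} is preserved through the double passage $(\delta,\ep)\to(0,0)$: in the absence of full Lagrangian duality (only $G$ in $p$, not the composite $f+G$, is convex) the test family must be built entirely on the PDE side. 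A secondary technical point is controlling the $O(\ep^2)$ Taylor remainder against a measure $\sigma^{\ep,\delta}$ whose mass scales like $1/\ep$; this is handled by the uniform bound $\|\ep v^\ep\|_\infty=o(1)$ together with $f\in C^2$.
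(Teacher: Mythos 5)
Your overall architecture coincides with the paper's: vanishing-viscosity regularization, the linearized adjoint equation, extraction of Mather-type measures, and a two-sided comparison in which the convexity of $G$ in $p$ and the constraint \eqref{class-sub} are played off against each other. Your lower bound $v\ge v^0$ is essentially the paper's Lemma \ref{lem:key2}, and your adjoint normalization (total mass of order $1/\ep$ rather than order one) is an immaterial rescaling.

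The gap is in the upper bound. To conclude $v\in\cE$ and hence $v\le v^0$, you must verify $\iint f_r(x,0)\,v\,d\mu\le 0$ for \emph{every} $\mu\in\cM$, i.e.\ for measures generated from arbitrary base points and arbitrary subsequences that need not be related to the subsequence along which $v^{\ep_j}\to v$. Your proposed mechanism --- ``repeat the adjoint manipulation with $v^{\ep,\delta}$ playing the role of $w$'' --- ties the test function and the adjoint measure to the same parameters $(\ep,\delta)$, so it can only deliver the constraint for the measure generated along that same subsequence; for a different subsequence the corresponding limit of the solutions could a priori be a different function $v'$, and you would obtain $\iint f_r\,v'\,d\mu'\le 0$ rather than $\iint f_r\,v\,d\mu'\le 0$, which is not enough to place $v$ in $\cE$. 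The paper's Lemma \ref{lem:key1} avoids this by decoupling the two limits entirely: it mollifies $v^\ep$ at an independent scale $\eta$, derives from (A2) the pointwise inequality
\[
0\ \ge\ f(x,0)+\ep f_r(x,0)v^\ep(x)+G(x,p)+D_pG(x,p)\cdot\bigl(D\psi^\eta(x)-p\bigr)-C\eta-o(\ep),
\]
valid for all $(x,p)$, and integrates it against an \emph{arbitrary} $\mu\in\cM$, using only the energy identity and the closedness property of Proposition \ref{prop:mather-property} (which hold for every element of $\cM$) to eliminate the $G$ and $D_pG$ terms; the adjoint PDE itself is never invoked at this stage. Dividing by $\ep$ and sending $\eta\to0$ gives $\iint f_r(x,0)v^\ep\,d\mu\le o(1)$ uniformly over $\mu\in\cM$, and then $\ep\to0$ along your subsequence yields the constraint for all of $\cM$ simultaneously. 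You need this decoupled version of the estimate; as written, your upper-bound step does not close.
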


A Hamiltonian, $H,   $  is {\it strictly quasi-convex}
if it satisfies the following assumption: 
\begin{itemize}
\item[(A4)]For any $a\in\R$ and $x\in\T^n$, the set 
$\{p \in \R^n\,:\, H(x,p)\le a\}$ is convex, and there exists a constant,
$\lam_0>0,$ such 
that 
\[
\lam_0^2 D_pH(x,p)\otimes D_pH(x,p)+\lam_0 D_{pp}^2H(x,p) \geq 0
\quad\text{for all} \ (x,p)\in\T^n\times\R^n.
\]
\end{itemize}
If the preceding assumption holds, we have that  $G(x,p):=e^{\lam_0H(x,p)}$
is a convex 
function of $p$. In addition to (A4), it is useful to introduce the  following growth
assumption on $G$.
\begin{itemize}
\item[(A5)]
{ $G(x,p)=e^{\lam_0H(x,p)}$ satisfies (A1).} 
\end{itemize}

%

\begin{thm}\label{thm:quasi}
Assume {\rm(A4)}\ and  {\rm\ (A5)} hold. Let  $u^\ep$ solve \eqref{de}. Then, $u^\ep$ solves \eqref{eq:general}
for  $f(x,r)=-e^{-\lam_0 r}$ for $(x,r) \in \T^n \times \R  $, and $G$\ as in Assumption 
  {\rm\ (A5).} Moreover, as $\ep\to 0$, $u^\ep$ 
converges uniformly  to the function 
$u^0$ determined by the conditions in Theorem \ref{thm:main}.  
\end{thm}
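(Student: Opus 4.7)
The plan is to reduce Theorem \ref{thm:quasi} to Theorem \ref{thm:main} via the exponential change of variable, which is the standard device for convexifying a strictly quasi-convex Hamiltonian. The main work is to verify that the transformed data $(f,G)$ satisfy (A1)--(A3) and to match the ergodic constants; once this is done, the conclusion is an immediate application of Theorem \ref{thm:main}.

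First, I would show $u^\ep$ also solves (GE${}_\ep$) with $f(x,r) = -e^{-\lam_0 r}$ and $G(x,p) = e^{\lam_0 H(x,p)}$. The equation (D${}_\ep$) reads $H(x,Du^\ep) = -\ep u^\ep$ in the viscosity sense, and since $s\mapsto e^{\lam_0 s}$ is smooth and strictly increasing it preserves viscosity semi-solutions; composing gives $G(x,Du^\ep) = e^{-\lam_0 \ep u^\ep}$, i.e., $f(x,\ep u^\ep) + G(x,Du^\ep) = 0$. Next, the hypotheses: (A1) is (A5); for (A2), the direct calculation
\[
D_{pp}^2 G = \lam_0 e^{\lam_0 H}\bigl[\lam_0\, D_p H \otimes D_p H + D_{pp}^2 H\bigr]
\]
is nonnegative by (A4) (after dividing through by $\lam_0>0$), as already noted in the text right after (A4); for (A3), $f_r(x,r) = \lam_0 e^{-\lam_0 r} > 0$, and choosing $M \ge \|H(\cdot,0)\|_{L^\infty(\T^n)}$ yields $f(x,-M) = -e^{\lam_0 M} \le -e^{\lam_0 H(x,0)} = -G(x,0) \le -e^{-\lam_0 M} = f(x,M)$.

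Finally, I would align the ergodic constants. The ergodic problem (GE) here is $G(x,Dv) = e^{\lam_0 c}$, equivalently $H(x,Dv) = c$; uniqueness of the ergodic constant in (E) forces $c = \overline{H}$. I would then normalize by replacing $H$ with $H - \overline{H}$ at the outset — this preserves (A4) and (A5) and merely multiplies $G$ by a harmless positive constant — so that $c=0$ and Theorem \ref{thm:main} applies to $u^\ep$ directly, identifying the uniform limit with $u^0 := v^0$. There is no genuine obstacle, only bookkeeping: the subsolution class $\cE$ and the Mather class $\cM$ are defined intrinsically in terms of $(f,G)$, so the characterization of $u^0$ transports cleanly back to the original Hamiltonian, which is precisely the content of the final assertion of the theorem.
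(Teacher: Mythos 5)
Your proposal is correct and follows essentially the same route as the paper: the exponential change of variable (the paper's Lemma \ref{lem:exp}), verification that $(f,G)$ satisfy (A1)--(A3), normalization of the ergodic constant to zero, and then a direct application of Theorem \ref{thm:main}. The only difference is that you spell out the verification of (A2)--(A3) and the matching of ergodic constants, which the paper dismisses as ``clear.''
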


%
\begin{rem}\label{rem:A4}
While assumption (A4) is somewhat technical, it holds 
if, 
 for each fixed $x\in \T^n$
and each $s>\min H(x,\cdot)$, the level set $\{p\in \R^n\,:\, H(x,p)=s\}$
is a closed $(n-1)$-dimensional manifold whose second fundamental form is strictly
positive.

More precisely, the following assumption implies (A4):
\begin{itemize}
\item[(A4')] for each fixed $x\in \T^n$,
and each $s>s_0=\min H(x,\cdot)$, the level set $M_s=\{p\in \R^n\,:\, H(x,p)=s\}$
is a closed manifold of dimension  $n-1$  and, for each $p \in M_s$,
there exists $c=c(s)>0$ such that
\[
(B_p v)\cdot v \geq c|v|^2 \quad \text{for all} \ v \in T_p M_s,
\]
where $T_p M_s$ is the tangent plane to $M_s$ at $p$ and $B_p: T_p M_s \times
T_p M_s \to \R$ is the second fundamental form of $M_s$ at $p$.
Furthermore, there exists $\al>0$ such that, for each $p \in \partial
M_{s_0}=\partial \{p\in \R^n\,:\, H(x,p)=s_0\}$,
we have\[
D^2_{pp} H(x,p) \geq \al I_n,
\]
where $I_n$ is the identity matrix of size $n$
\end{itemize}
(see \cite[Section 9.7]{CGT1} for details).
The preceding condition is satisfied by a broad class of quasi-convex Hamiltonians of which a typical example  is  \[H(x,p)=K(|p|)+V(x),\] where $K:[0,\infty) \to \R$
is of class $C^2$ and satisfies
\[
K'(0)=0, \ K''(0)>0, \ \text{and} \ K'(s)>0 \ \text{for} \ s>0.
\]
\end{rem}

In Section \ref{sec:dw},
we consider an alternative approach to the non-convex, double-well Hamiltonian in one-dimensional space,
\begin{equation}\label{H-nonconvex}
H(x,p)=(|p+P|^2-1)^2 - V(x), 
\end{equation}
where $P\in\R$ and $V:\T\to\R$ is a continuous function
satisfying 
\begin{equation}\label{assume:V}
\min_{\T} V=0 \quad\text{and}\quad 
\max_{\T} V<1. 
\end{equation}
Although this Hamiltonian does not satisfy  (A4), we prove the following convergence result.

\ \ \ 
\begin{thm}\label{thm:double-well}
Let $H$\ be given by \eqref{H-nonconvex}. Let $u^\ep$ be the corresponding
solution of \eqref{de}  for a fixed $P \in \R$. 
Then,  there exists a  solution, $u^0 \in C(\T)$,  of 
\begin{equation}\label{eq:P-cell}
(|P+Du^0|^2-1)^2 - V(x) = \ol{H}(P) \quad \text{in} \ \T
\end{equation}
such that
\[
\lim_{\ep \to 0} \left (u^\ep + \frac{\ol{H}(P)}{\ep} \right)= u^0 \quad
\text{in} \ C(\T).
\]
\end{thm}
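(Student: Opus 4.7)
The plan is to bypass the general theory of Theorems~\ref{thm:main} and~\ref{thm:quasi} (which do not apply to the double-well $H$) and exploit the one-dimensional structure directly, in the spirit of the direct approach of \cite{DFIZ} and the adjoint-based scheme of \cite{MT6}. Set $v^\ep := u^\ep + \ol{H}(P)/\ep$, so that $v^\ep$ is the viscosity solution of
\[
\ep v^\ep + (|P+v^\ep_x|^2-1)^2 - V(x) \;=\; \ol{H}(P) \quad \text{in } \T,
\]
and the goal is to prove $v^\ep \to u^0$ uniformly for a specific solution $u^0$ of~\eqref{eq:P-cell}.

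First I would establish a priori bounds. Coercivity of $H$ combined with \eqref{intro:apriori} gives $\|v^\ep_x\|_{L^\infty(\T)}\le C$. To promote this to a uniform $L^\infty$ bound on $v^\ep$ itself, I would construct strict classical sub- and super-solutions $\phi_\pm$ of \eqref{eq:P-cell} whose shifts $\phi_\pm - \ol{H}(P)/\ep$ are sub/super-solutions of \eqref{de} for all small $\ep$; comparison for \eqref{de} then gives $\phi_- - o(1) \leq v^\ep \leq \phi_+ + o(1)$. Here the hypothesis $\max V<1$ is essential: it keeps the branches $P+\phi_x = \pm\sqrt{1\pm\sqrt{V(x)+\ol H(P)}}$ globally defined, so that $\phi_\pm$ can be built by integrating suitable branch choices consistent with the periodicity on $\T$. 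By Arzelà-Ascoli, any sequence $\ep_j\to 0$ has a subsequence along which $v^{\ep_j}\to\bar v$ in $C(\T)$, and the stability of viscosity solutions shows that $\bar v$ solves~\eqref{eq:P-cell}.

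The main obstacle is proving that this limit $\bar v$ is independent of the subsequence. For this I would define $u^0 := \sup_{\varphi} \varphi$, where the supremum is taken over all viscosity subsolutions $\varphi$ of~\eqref{eq:P-cell} satisfying a one-sided normalization analogous to \eqref{class-sub}. The inequality $\bar v \geq u^0$ is standard: each admissible $\varphi$ yields a subsolution $\varphi - \ol{H}(P)/\ep$ of~\eqref{de}, and comparison gives $v^\ep \geq \varphi - o(1)$. The reverse inequality $\bar v \leq u^0$ is where the non-convexity bites, and I would argue directly using the one-dimensional ODE structure. Partitioning $\T$ by the zeros of $V(x)+\ol{H}(P)$, the set where $V(x)+\ol{H}(P)=1$, and the extrema of $V$, I would identify an Aubry-like set $\cA(P)\subset\T$ on which every subsolution of~\eqref{eq:P-cell} is pinned, establish pointwise coincidence $\bar v = u^0$ on $\cA(P)$ by comparing $v^\ep$ against explicit sub/super-solutions of the discounted equation localized near $\cA(P)$, and finally propagate the coincidence to $\T\setminus\cA(P)$ using the explicit 1D formula $v_x = -P \pm \sqrt{1\pm\sqrt{V(x)+\ol{H}(P)}}$ on each connected component, with the branch dictated by the viscosity criterion at kinks. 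The main technical difficulty lies in the branch-selection analysis at viscosity kinks and in verifying that $\cA(P)$ is indeed the correct pinning set in the absence of convexity; the assumption $\max V<1$ keeps the two sign branches genuinely separated, which is what makes the direct ODE propagation well-posed.
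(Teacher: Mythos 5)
There is a genuine gap, and it sits exactly where you place the ``main obstacle.'' The paper's proof does not bypass Theorems~\ref{thm:main} and~\ref{thm:quasi}; its whole point is to \emph{reduce} the double-well problem to them. The key device you are missing is an a priori branch-localization (Proposition~\ref{prop:apriori}): depending on whether $|P|<1$ with $\ol H(P)>0$, $|P|>1$ with $\ol H(P)>0$, or $\ol H(P)=0$, one shows by barrier arguments that $|P+u^\ep_x|\le 1$, or $P+u^\ep_x\ge 1$ (resp.\ $\le -1$), or $P+u^\ep_x\ge 0$ (resp.\ $\le 0$), for all small $\ep$. Once the gradient is pinned to a single monotone branch of $p\mapsto(|p|^2-1)^2$, taking a square root rewrites \eqref{eq:nonconvex} as $-\sqrt{V(x)+\ol H(P)-\ep v^\ep}\pm(|P+v^\ep_x|^2-1)=0$, i.e.\ as an instance of \eqref{eq:general} with $G$ convex or concave in $p$ (or, in case (c), as a strictly quasi-convex problem covered by Theorem~\ref{thm:quasi}). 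Without this reduction, the objects your argument needs do not exist: the normalization \eqref{class-sub} is phrased in terms of the measures $\cM$ of \eqref{Mather-general}, which are built from the adjoint equation \eqref{aj} of the \emph{convexified} problem, and the two key estimates (Lemmas~\ref{lem:key1} and~\ref{lem:key2}) use the convexity of $G$ essentially (mollification plus the supporting-hyperplane inequality). Saying ``a one-sided normalization analogous to \eqref{class-sub}'' for the raw non-convex $H$ leaves the class $\cE$, and hence $u^0$, undefined.

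Two further steps would fail as written. First, the inequality you call standard, $v^\ep\ge\varphi-o(1)$ by comparison, is not what comparison gives: if $\varphi$ is a subsolution of \eqref{eq:P-cell}, then $\varphi-\max_\T\varphi-\ol H(P)/\ep$ is a subsolution of \eqref{de}, so comparison only yields $v^\ep\ge\varphi-\max_\T\varphi$, an $O(1)$ loss; removing that constant for normalized subsolutions is precisely the content of Lemma~\ref{lem:key2}, which requires the adjoint measures and convex $G$. Second, the ``Aubry-like pinning set'' strategy is suspect here: Section~\ref{sec:ex} of the paper exhibits a quasi-convex $H$ for which the Aubry set defined via maximal subsolutions is empty, and in cases (a) and (b) one has $\ol H(P)>0$ with $V\ge 0$, so your proposed anchor set $\{V+\ol H(P)=0\}$ is empty and the ODE propagation has nothing to start from. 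Finally, the branch formula $v_x=-P\pm\sqrt{1\pm\sqrt{V+\ol H(P)}}$ is only usable once you know which branch $v^\ep_x$ selects---which is again Proposition~\ref{prop:apriori}, the step your outline omits. Your a priori bounds and the Arzel\`a--Ascoli/stability part are fine, but the uniqueness-of-the-limit argument needs to be rebuilt around the branch localization and the reduction to \eqref{eq:general}.
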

\begin{rem}
In the proof of the preceding theorem, we also obtain a characterization of limit $u_0$ that depends on the value of $P$ (see Section \ref{sec:dw}). 
\end{rem}


\section{A generalization of the discounted approximation
}\label{sec:general}

Here, we use the nonlinear adjoint method  \cite{Ev1} (see also \cite{T1}) and 
the strategy introduced in \cite{MT6} for the study of \eqref{eq:general} 
to investigate the limit $\ep\to 0$. 

\subsection{A regularized problem and the construction of Mather measures}\label{subsec:mather}

To study  \eqref{eq:general}, we introduce the following
regularized problem. For each $\eta>0$, we consider 
\begin{equation}
\label{ap}
f(x,\ep v^{\ep,\eta})+G(x,Dv^{\ep,\eta})=\eta^2\Del v^{\ep,\eta} \qquad \text{in} \ \T^n. 
\tag{${\rm A}_\ep^\eta$}
\end{equation}
\begin{lem}\label{lem:g1} 
Suppose that {\rm(A1) and (A3)}  hold. Then, 
there exists a constant, $C>0,$ independent of $\ep$ and $\eta$ such that,
for any solution 
 $v^{\ep,\eta}$ of \eqref{ap}, we have\begin{equation}
\label{lest}
\|Dv^{\ep,\eta}\|_{\Li(\T^n)}\le C.
\end{equation}
\end{lem}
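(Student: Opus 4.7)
The plan is a Bernstein-type gradient estimate on the smooth solution $v := v^{\ep,\eta}$ of the uniformly elliptic regularization \eqref{ap}: first use (A3) to pin $\ep v$ in a fixed bounded range, then exploit (A1) at a maximum point of $|Dv|^2$ to close the bound. The delicate part is arranging things so that the Bernstein inequality ends in precisely the combination $G^2/(2|p|) - |D_x G|$ appearing in (A1); the vanishing-eigenvalue identity $D^2 v\,Dv = 0$ at the maximum, which converts $\eta^2|D^2 v|^2$ into $(G+f)^2/\eta^2$ via the PDE, is what makes this possible.

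Since $f_r > 0$, \eqref{ap} satisfies a strict comparison principle, and the condition $f(x,M) \ge -G(x,0)$ in (A3) says that $v \equiv M/\ep$ is a classical supersolution of \eqref{ap}; symmetrically $v \equiv -M/\ep$ is a subsolution. Hence
\[
\|\ep v^{\ep,\eta}\|_{L^\infty(\T^n)} \le M
\]
uniformly in $\ep,\eta$, and in particular $f(x,\ep v)$, $f_r(x,\ep v)$, and $D_xf(x,\ep v)$ are bounded by a constant depending only on $\|f\|_{C^2(\T^n\times[-M,M])}$.

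Set $w = |Dv|^2/2$. Differentiating \eqref{ap} in $x_k$, contracting with $v_{x_k}$, and using the identity $Dv \cdot D\Delta v = \Delta w - |D^2 v|^2$ yields
\begin{equation*}
\eta^2 \Delta w - D_p G(x,Dv)\cdot Dw = \eta^2|D^2 v|^2 + D_x G(x,Dv)\cdot Dv + D_x f(x,\ep v)\cdot Dv + 2\ep f_r(x,\ep v)\,w.
\end{equation*}
At a maximum $x_0 \in \T^n$ of $w$, $Dw(x_0) = 0$ and $\Delta w(x_0) \le 0$; since $f_r > 0$, the non-negative term $2\ep f_r w$ may be dropped, giving
\begin{equation*}
\eta^2|D^2 v(x_0)|^2 + D_x G(x_0,Dv(x_0))\cdot Dv(x_0) \le C_0|Dv(x_0)|,
\end{equation*}
with $C_0$ depending only on $\|D_x f\|_{L^\infty(\T^n\times[-M,M])}$.

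Now $Dw(x_0) = 0$ is the same as $D^2 v(x_0)\,Dv(x_0) = 0$, so $Dv(x_0)$ lies in the kernel of $D^2 v(x_0)$; Cauchy--Schwarz on the remaining $n-1$ eigenvalues gives $(\Delta v(x_0))^2 \le (n-1)|D^2 v(x_0)|^2$, and the PDE identity $\eta^2 \Delta v = f + G$ then yields $\eta^2|D^2 v(x_0)|^2 \ge (G+f)^2/((n-1)\eta^2)$. Inserting this into the Bernstein inequality, applying $(G+f)^2 \ge \tfrac12 G^2 - f^2$ with $|f|$ uniformly bounded, dividing by $|Dv(x_0)|$ (which we may assume is $\ge 1$), and restricting to $\eta^2 \le 1/(n-1)$ (the complementary range, where \eqref{ap} is non-degenerate uniformly elliptic, is handled by a standard Schauder estimate), we arrive at
\[
\frac{G(x_0,Dv(x_0))^2}{2|Dv(x_0)|} - |D_x G(x_0,Dv(x_0))| \le C_1
\]
with $C_1$ independent of $\ep,\eta$. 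By (A1) the left-hand side tends to $+\infty$ as $|p| \to \infty$ uniformly in $x$, so $|Dv(x_0)|$ must remain in a bounded set, which proves \eqref{lest}.
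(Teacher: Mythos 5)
Your proposal is correct and follows essentially the same route as the paper: the comparison principle with the constant barriers $\pm M/\ep$ from (A3) to bound $\ep v^{\ep,\eta}$, then Bernstein's method at a maximum of $|Dv^{\ep,\eta}|^2/2$, converting $\eta^2|D^2v^{\ep,\eta}|^2$ into $(f+G)^2$ through the equation and closing with (A1). The only (harmless) refinements are your use of the kernel identity $D^2v\,Dv=0$ to get the constant $n-1$ in place of the paper's $n$, and the explicit remark about the range of large $\eta$, which the paper silently ignores since only $\eta\to0$ matters.
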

{
\begin{proof}
Thanks to (A3), $-\ep^{-1}M$ and $\ep^{-1}M$ are a subsolution and a supersolution of \eqref{ap}, respectively.
We use the comparison principle to get $-\ep^{-1}M \leq v^{\ep,\eta} \leq \ep^{-1}M$ in $\T^n$.
In particular, $|f(x,\ep v^{\ep,\eta})| \leq C$ in $\T^n$ for $C = \max_{x \in \T^n, |r| \leq M} |f(x,r)|$.

Now, we prove the Lipschitz bound using Bernstein's method. 
First, we set $\phi:=|Dv^{\ep,\eta}|^2/2$. 
Differentiating \eqref{ap} in $x$ and multiplying by $Dv^{\ep,\eta}$, we get
\[
\ep f_r|Dv^{\ep,\eta}|^2+(D_xf+D_xG)\cdot Dv^{\ep,\eta}+D_pG\cdot D\phi
=\eta^2(\Del \phi-|D^2 v^{\ep,\eta}|^2). 
\]
Next, we choose $x_0\in\T^n$ such that $\phi(x_0)=\max_{\T^n}\phi$. 
According to (A3), we obtain 
\[
(D_xf+D_xG)\cdot Dv^{\ep,\eta}+\eta^2|D^2 v^{\ep,\eta}|^2\le 0 \quad\text{at} \ x_0\in\T^n.  
\] 
For $\eta<n^{-1/2}$,
we have\[\eta^2|D^2 v^{\ep,\eta}|^2\ge 
|\eta^2\Del v^{\ep,\eta}|^2=|f(x,\ep v^{\ep,\eta})+G(x, Dv^{\ep,\eta})|^2\ge 
\frac{1}{2}G(x_0,Dv^{\ep,\eta})^2-C
\]
for some $C>0$.  
Finally, to end the proof, we use (A1) to get \eqref{lest}. 
\end{proof}
}

The Lipschitz bound \eqref{lest} and the uniqueness 
of the solution of \eqref{eq:general} give that 
 $v^{\ep,\eta}\to v^{\ep}$ in $C(\T^n)$ as $\eta\to 0$.

Fix $x_0\in \T^n$ and let $\del_{x_0}$ denote the Dirac delta at $x_0$.
Next, we consider the linearization of \eqref{ap}
and introduce the corresponding adjoint equation
\begin{equation}
\label{aj}
\ep f_r(x,0)\theta^{\ep,\eta} - \Div (D_pG(x,Du^{\ep,\eta})\theta^{\ep,\eta})
=\eta^2 \Del \theta^{\ep,\eta} + \ep \del_{x_0}\qquad \text{in} \ \T^n.
\tag{${\rm AJ}_\ep^\eta$}
\end{equation}


Integrating \eqref{aj} in $\T^n$ and using the maximum principle, we get the following proposition.
\begin{prop}\label{prop:g2}
Let 
 $v^{\ep,\eta}$ solve \eqref{ap} and let $\theta^{\ep,\eta}$ solve \eqref{aj}. Then, we have 
\[\theta^{\ep,\eta}>0 \ \text{in} \ \T^n \setminus \{x_0\} \quad \text{and} \quad 
\int_{\T^n} f_r(x,0)\theta^{\ep,\eta}(x)\,dx=1 \quad \text{for any} \ \ep,\eta>0.\]
\end{prop}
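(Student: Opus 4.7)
My plan is to handle the two conclusions separately. The integral identity is the easier step: I would interpret \eqref{aj} in the distributional sense and integrate it over $\T^n$. Because $\T^n$ has no boundary, both $\Div(D_pG(x,Dv^{\ep,\eta})\theta^{\ep,\eta})$ and $\eta^2\Del \theta^{\ep,\eta}$ integrate to zero, while the Dirac mass contributes $\ep$. Rearranging then yields
\[
\int_{\T^n} f_r(x,0)\theta^{\ep,\eta}(x)\,dx = 1.
\]

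For the pointwise positivity I would use the standard duality trick from the nonlinear adjoint method. Introduce the forward linearized operator
\[
L\phi := \ep f_r(x,0)\phi + D_pG(x,Dv^{\ep,\eta})\cdot D\phi - \eta^2\Del \phi,
\]
whose formal adjoint on $\T^n$ is exactly the left-hand side of \eqref{aj}. Assumption (A3) gives $\ep f_r(x,0)>0$ and $\eta^2>0$, so $L$ is uniformly elliptic with a strictly positive zeroth-order coefficient; the classical maximum principle together with Hopf's lemma then guarantees that for every $h\in C^{\infty}(\T^n)$ with $h\ge 0$, $h\not\equiv 0$, the unique solution $\phi$ of $L\phi=h$ satisfies $\phi>0$ on $\T^n$. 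Integrating by parts on the torus one gets
\[
\int_{\T^n}\theta^{\ep,\eta}\,h\,dx = \int_{\T^n}\theta^{\ep,\eta}\,L\phi\,dx = \ep\,\phi(x_0) > 0,
\]
which shows $\theta^{\ep,\eta}\ge 0$ in the distributional sense. Since the source in \eqref{aj} is smooth away from $x_0$, elliptic regularity yields $\theta^{\ep,\eta}\in C^{\infty}(\T^n\setminus\{x_0\})$, and the distributional inequality upgrades to pointwise nonnegativity on that set. Finally, $\theta^{\ep,\eta}$ is a nonnegative classical solution of the homogeneous linear elliptic equation $L^*\theta^{\ep,\eta}=0$ on the connected open set $\T^n\setminus\{x_0\}$, so Harnack's inequality (equivalently, the strong maximum principle) forces it to be either identically zero or strictly positive there. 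The integral identity from the first step rules out the trivial alternative, giving $\theta^{\ep,\eta}>0$ on $\T^n\setminus\{x_0\}$.

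The main technical point to treat with care is making the duality step rigorous in the presence of the Dirac source. A clean way is to mollify: replace $\ep\del_{x_0}$ by $\ep\psi_{\sigma}$ with $\psi_\sigma$ a standard approximation of the identity, establish both conclusions for the smooth solution $\theta^{\ep,\eta}_\sigma$ of the corresponding regularized adjoint equation, and then send $\sigma\to 0$ using uniform $L^1$ bounds (which the integral identity provides) and local elliptic regularity away from $x_0$. With this precaution, each step of the argument above is standard.
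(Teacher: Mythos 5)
Your proposal is correct and follows essentially the same route as the paper, which disposes of this proposition in one line (``Integrating \eqref{aj} in $\T^n$ and using the maximum principle''): integration over the boundaryless torus gives the normalization, and duality with the forward linearized operator together with the maximum principle (and Harnack away from $x_0$) gives strict positivity. Your write-up is just a careful expansion of that standard nonlinear-adjoint-method argument, including the appropriate mollification of the Dirac source.
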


In light of Lemma \ref{lem:g1} and of the Riesz Representation Theorem, there exists a 
non-negative Radon measure, $\nu^{\ep,\eta}$, on $\T^n \times \R^n$ such that 
\begin{equation}\label{def-nu-ep}
\int_{\T^n} \psi(x,Dv^{\ep,\eta}) \theta^{\ep,\eta}(x)\,dx
=\iint_{\T^n \times \R^n} \psi(x,p)\,d\nu^{\ep,\eta}(x,p), 
\quad \forall \, \psi \in C_c(\T^n \times \R^n). 
\end{equation}
Because $f_r(x,0)>0$ for all $x\in\T^n$ and because of Proposition \ref{prop:g2}, we have 
\[
\frac{1}{\max_{x\in\T^n}f_r(x,0)}
\le\iint_{\T^n\times\R^n}d\nu^{\ep,\eta}
\le \frac{1}{\min_{x\in\T^n}f_r(x,0)}. 
\]
Therefore, two subsequences, $\ep_j$ and $\eta_k$, exist with  $\ep_j \to 0$ and $\eta_k \to 0$  as $j, k\to\infty$. Probability measures, $\nu^{\ep_j}, \nu \in \cP(\T^n \times \R^n),$ also exist such that
\begin{equation}\label{mather}
\begin{array}{ll}
\nu^{\ep_j,\eta_k}\rightharpoonup \nu^{\ep_j}  
\quad&\text{as} \ \ k\to\infty,\\
\nu^{\ep_j}\rightharpoonup \nu  
\quad&\text{as} \ \ j\to\infty,
\end{array}
\end{equation}
weakly in the sense of measures.
The limit $\nu$ depends on $x_0$ and  on subsequences $\{\ep_j\}$ and $\{\eta_k\}.$ Thus, when we need to highlight this explicit dependence, we write it as $\nu=\nu(x_0,\{\ep_j\}, \{\eta_k\})$. 
Next,  we define the family of measures, $\cM\subset \cP,$ as 
\begin{equation}\label{Mather-general}
\cM=\bigcup_{x_0\in\T^n,\{\ep_j\}, \{\eta_k\}}\nu(x_0,\{\ep_j\}, \{\eta_k\}).
\end{equation}

\begin{prop}\label{prop:mather-property}
Suppose that {\rm (A1) and (A3)}\ hold. \ Then, for any $\nu\in\cM$, we have
\begin{itemize}
\item[(i)] \ 
$\displaystyle 
\iint_{\T^n\times\R^n}\big(D_pG(x,p)\cdot p-G(x,p)\big)\,d\nu(x,p)
=\iint_{\T^n\times\R^n}f(x,0)\,d\nu(x,p)$,
\item[(ii)] \ 
$\displaystyle 
\iint_{\T^n \times \R^n} D_pG(x,p)\cdot D\varphi \,d\nu(x,p)=0 
$ 
\quad for any $\varphi\in C^1(\T^n)$.  
\end{itemize}
\end{prop}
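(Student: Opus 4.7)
The plan is to establish both identities via the nonlinear adjoint method, testing the adjoint equation \eqref{aj} against two different functions at the regularized level $(\ep,\eta)$, and then passing to the limit $\eta\to 0$ followed by $\ep\to 0$ using the weak convergence \eqref{mather} together with the uniform bounds from Lemma \ref{lem:g1} and Proposition \ref{prop:g2}. Throughout, the uniform Lipschitz bound \eqref{lest} means that each $\nu^{\ep,\eta}$ is supported in $\T^n\times \overline{B_C}$ for a fixed $C$, so the continuous integrands appearing below (which are not a priori compactly supported in $p$) can be handled after multiplication by a smooth cutoff, and the representation \eqref{def-nu-ep} applies.

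For (ii), I would test \eqref{aj} against a fixed $\varphi\in C^1(\T^n)$ and integrate by parts on the torus to get
\[
\ep \int_{\T^n} f_r(x,0)\theta^{\ep,\eta}\varphi\,dx + \int_{\T^n} D_pG(x,Dv^{\ep,\eta})\cdot D\varphi\,\theta^{\ep,\eta}\,dx = \eta^2 \int_{\T^n} \theta^{\ep,\eta}\Del\varphi\,dx + \ep\varphi(x_0).
\]
Proposition \ref{prop:g2} and (A3) give a uniform bound on $\int_{\T^n}\theta^{\ep,\eta}\,dx$, so the right-hand side and the $\ep$-prefactored term on the left vanish as $\eta\to 0$ and then $\ep\to 0$. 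Rewriting the remaining term via \eqref{def-nu-ep} as $\iint D_pG(x,p)\cdot D\varphi(x)\,d\nu^{\ep,\eta}$ and invoking \eqref{mather} yields (ii).

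For (i), I would test \eqref{aj} against $v^{\ep,\eta}$ itself; after integration by parts, the Laplacian term on the right becomes $\eta^2 \int_{\T^n}\theta^{\ep,\eta}\Del v^{\ep,\eta}\,dx$, which I replace using \eqref{ap} by $\int_{\T^n}\bigl(f(x,\ep v^{\ep,\eta})+G(x,Dv^{\ep,\eta})\bigr)\theta^{\ep,\eta}\,dx$. Rearranging gives
\[
\int_{\T^n}\bigl(D_pG(x,Dv^{\ep,\eta})\cdot Dv^{\ep,\eta}-G(x,Dv^{\ep,\eta})\bigr)\theta^{\ep,\eta}\,dx
= \int_{\T^n} f(x,\ep v^{\ep,\eta})\theta^{\ep,\eta}\,dx + \ep v^{\ep,\eta}(x_0) - \ep\int_{\T^n} f_r(x,0)\theta^{\ep,\eta}v^{\ep,\eta}\,dx.
\]
Converting the left-hand integral and the first right-hand integral to integrals against $\nu^{\ep,\eta}$ via \eqref{def-nu-ep} and sending $\eta\to 0$, $\ep\to 0$ through \eqref{mather} should produce (i), provided the last two terms on the right vanish and $f(x,\ep v^{\ep,\eta})\to f(x,0)$.

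The main obstacle is controlling the last two terms on the right. For $\ep\int f_r(x,0)\theta^{\ep,\eta}v^{\ep,\eta}\,dx$ I would split $v^{\ep,\eta}=(v^{\ep,\eta}-v^{\ep,\eta}(x_0))+v^{\ep,\eta}(x_0)$: the equi-Lipschitz bound \eqref{lest} controls the first piece uniformly on $\T^n$, while $\int_{\T^n} f_r(x,0)\theta^{\ep,\eta}\,dx=1$ from Proposition \ref{prop:g2} reduces the second piece to a constant multiple of $\ep v^{\ep,\eta}(x_0)$. After the normalization $c=0$ imposed before \eqref{ge}, the relations $v^{\ep,\eta}\to v^\ep$ as $\eta\to 0$ and $\ep v^\ep\to 0$ as $\ep\to 0$ give both $\ep v^{\ep,\eta}(x_0)\to 0$ and $\ep v^{\ep,\eta}\to 0$ uniformly, whence $f(x,\ep v^{\ep,\eta})\to f(x,0)$ uniformly; this last fact justifies the passage of the first right-hand integral to $\iint f(x,0)\,d\nu$ via \eqref{mather}, closing the argument.
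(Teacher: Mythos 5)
Your proposal is correct and follows essentially the same route as the paper: part (ii) is the identical pairing of \eqref{aj} with $\varphi$, and for part (i) your pairing of \eqref{aj} with $v^{\ep,\eta}$ followed by substitution of \eqref{ap} for the Laplacian term yields exactly the same integration-by-parts identity the paper obtains by multiplying the rewritten equation \eqref{ap} by $\theta^{\ep,\eta}$. Your treatment of the error terms (equi-Lipschitz bound, $\int_{\T^n} f_r(x,0)\theta^{\ep,\eta}\,dx=1$, and the normalization $c=0$) matches what the paper implicitly uses when sending $\eta\to0$ and then $\ep\to0$.
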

\begin{proof}
{
We first prove (i). 
Note that (A)$_{\ep}^{\eta}$ can be rewritten as 
\[
f(x,\ep v^{\ep,\eta})+D_pG(x,Dv^{\ep,\eta})\cdot Dv^{\ep,\eta}
-\eta^2\Del v^{\ep,\eta}
= D_pG(x,Dv^{\ep,\eta})\cdot Dv^{\ep,\eta}-G(x,Dv^{\ep,\eta}). 
\]
Let $\theta^{\ep,\eta}$ solve \eqref{aj}.
Multiplying the previous equation by $\theta^{\ep,\eta}$, integrating on $\T^n$, 
and using integration by parts, we get 
\begin{align*}
&\int_{\T^n}(D_pG(x,Dv^{\ep,\eta})\cdot Dv^{\ep,\eta}-G(x,Dv^{\ep,\eta}))\theta^{\ep,\eta}\,dx\\
=&\, 
\int_{\T^n}
f(x,\ep v^{\ep,\eta})\theta^{\ep,\eta}\,dx
-
\int_{\T^n}
\left(\Div(D_pG\theta^{\ep,\eta})+\eta^2\Del\theta^{\ep,\eta}\right)
v^{\ep,\eta}\,dx\\
=&\, 
\int_{\T^n}
(f(x,\ep v^{\ep,\eta})-\ep f_r(x,0)v^{\ep,\eta})\theta^{\ep,\eta}\,dx
-\ep v^{\ep,\eta}(x_0). 
\end{align*}
We use \eqref{def-nu-ep},  set $\eta=\eta_{j}$, and let $j\to \infty$. Finally, we   set    $\ep=\ep_{k} $ and let $k\to \infty$ 
to get (i). 

Next, we multiply \eqref{aj} by $\varphi\in C^1(\T^n)$. 
Then, we  integrate on $\T^n$, use integration by parts, and set
$\eta=\eta_j$ and  $\ep=\ep_k$. Finally, we take the limit $j\to \infty$ and then $k\to \infty$ to obtain (ii). 
}
\end{proof}

\subsection{Key estimates}\label{subsec:key-est}

Next, we use the nonlinear adjoint method to establish estimates for the solutions of  \eqref{eq:general}.
These estimates are essential ingredients of our convergence result for \eqref{eq:general}.

\begin{lem}\label{lem:key1}
Suppose that {\rm (A1)-(A3)} hold. Let $v^{\ep}$ solve \eqref{eq:general}. Then, as $\ep\to 0$, 
\[
\iint_{\T^n\times \R^n}f_r(x,0)v^{\ep}(x)\,d\nu(x,p)\le o(1) \quad
\text{for all} \  \nu \in \cM.  
\]
\end{lem}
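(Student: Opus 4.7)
The plan is to apply the nonlinear adjoint method of \cite{MT6}, adapted to the generalized discounted equation \eqref{eq:general}. I would use the convexity of $p\mapsto G(x,p)$ in conjunction with the adjoint equation \eqref{aj} to compare the regularized solution $v^{\ep,\eta_k}$ at the fixed outer scale $\ep$ with the family $\{v^{\ep_j,\eta_k}\}$ whose gradients generate the Mather measure $\nu$. The key device is to use the \emph{same} viscosity parameter $\eta_k$ in both approximations so that, after integration against $\theta^{\ep_j,\eta_k}$, the second-order Laplacian contributions cancel exactly.

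More concretely, the supporting hyperplane inequality
\[
G(x,Dv^{\ep,\eta_k}) \ge G(x,Dv^{\ep_j,\eta_k}) + D_pG(x,Dv^{\ep_j,\eta_k})\cdot(Dv^{\ep,\eta_k}-Dv^{\ep_j,\eta_k}),
\]
combined with \eqref{ap} at both scales to rewrite the $G$-values as $\eta_k^2\Del-f$, yields a pointwise inequality. I would then multiply by $\theta^{\ep_j,\eta_k}\ge 0$, integrate on $\T^n$, and invoke integration by parts together with \eqref{aj}. The latter gives, for any smooth $w$,
\[
\int_{\T^n} D_pG(x,Dv^{\ep_j,\eta_k})\cdot Dw\,\theta^{\ep_j,\eta_k}\,dx = -\ep_j\int_{\T^n} f_r(x,0)\,w\,\theta^{\ep_j,\eta_k}\,dx + \eta_k^2\int_{\T^n}\Del w\,\theta^{\ep_j,\eta_k}\,dx + \ep_j w(x_0);
\]
choosing $w=v^{\ep,\eta_k}-v^{\ep_j,\eta_k}$ produces the promised cancellation of the $\eta_k^2\Del w$ terms. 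Passing to the limits $\eta_k\to 0$ (via $v^{\ep,\eta_k}\to v^\ep$ uniformly and the weak convergences of \eqref{mather}) and then $\ep_j\to 0$ (using $\ep_j v^{\ep_j}\to 0$ uniformly), all terms carrying a factor of $\ep_j$ drop out, leaving the clean averaged bound
\[
\int_{\T^n}[f(x,\ep v^\ep)-f(x,0)]\,d\pi_*\nu(x)\le 0,
\]
where $\pi_*\nu$ denotes the $\T^n$-marginal of $\nu$.

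Finally, a second-order Taylor expansion of $f$ in its second variable around $r=0$ gives $f(x,\ep v^\ep)-f(x,0) = \ep f_r(x,0)v^\ep + \tfrac12 f_{rr}(x,\xi(x))(\ep v^\ep)^2$ with $|\xi(x)|\le|\ep v^\ep(x)|$; inserting this into the previous inequality and dividing by $\ep$ yields
\[
\iint_{\T^n\times\R^n} f_r(x,0)\,v^\ep\,d\nu \;\le\; \frac{\|f_{rr}\|_\infty}{2\ep}\int_{\T^n}(\ep v^\ep)^2\,d\pi_*\nu.
\]
The main obstacle is to check that the right-hand side is genuinely $o(1)$ as $\ep\to 0$. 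Lemma \ref{lem:g1} supplies only the equi-Lipschitz estimate and the crude bound $\|v^\ep\|_\infty\le M/\ep$, so the quadratic remainder needs to be controlled via the sharper bound $\|v^\ep\|_\infty\le C$ independent of $\ep$. This can be obtained by combining the equi-Lipschitz estimate with the normalization $c=0$ (so that $\ep v^\ep\to 0$ uniformly) and the boundedness of $v^\ep(x_0)$ along suitable subsequences in the Arzel\`a--Ascoli spirit of \eqref{conv:sub}. Once this bound is in place, the right-hand side above is $O(\ep)=o(1)$, and the claim follows.
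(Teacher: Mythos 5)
Your route is genuinely different from the paper's. The paper never returns to the adjoint equation in this lemma: it mollifies $v^\ep$ itself, writes the supporting-hyperplane inequality for $G$ at a generic point $p$, and integrates directly against the limit measure $\nu$, at which point the two identities of Proposition \ref{prop:mather-property} (which already encode the integrations by parts against $\theta^{\ep_j,\eta_k}$) absorb all the terms except $\iint f_r(x,0)v^\ep\,d\nu$. You instead redo the adjoint computation at the level of the regularized problems with a matched viscosity parameter $\eta_k$, linearize $G$ at $Dv^{\ep_j,\eta_k}$, and pass to the limit afterwards. I checked the algebra: the cancellation of the $\eta_k^2\Del w$ terms is exact, the terms $\ep_j w(x_0)$ and $\ep_j\int f_r(x,0)w\,\theta^{\ep_j,\eta_k}\,dx$ do vanish in the iterated limit (this uses $\ep_j v^{\ep_j}\to 0$ uniformly, i.e.\ the normalization $c=0$, since $w$ contains $v^{\ep_j,\eta_k}$ which is only $O(1/\ep_j)$ in size), and your averaged inequality $\int_{\T^n}[f(x,\ep v^\ep)-f(x,0)]\,d\pi_*\nu\le 0$ follows. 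What your version buys is that it avoids the mollification step and the $C\eta/\ep$ bookkeeping; what it costs is that you must re-justify the limit passages that Proposition \ref{prop:mather-property} already took care of. Both proofs hinge on the same two ingredients: convexity of $G$ in $p$ and the adjoint structure.

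There is, however, one concrete gap: your justification of the uniform bound $\|v^\ep\|_{\infty}\le C$. Arzel\`a--Ascoli applied to $v^\ep-v^\ep(x_0)$ says nothing about $v^\ep(x_0)$ itself, and the normalization $c=0$ only gives $\|v^\ep\|_\infty=o(1/\ep)$, so the appeal to ``boundedness of $v^\ep(x_0)$ along suitable subsequences'' is circular --- that boundedness is exactly what is at stake, and without it your error term $\ep^{-1}\int(\ep v^\ep)^2\,d\pi_*\nu=\ep\int (v^\ep)^2\,d\pi_*\nu$ need not be $o(1)$. The correct argument is a comparison with a shifted ergodic solution: let $v$ solve \eqref{ge} with $c=0$; since $f_r>0$, the functions $v-\max_{\T^n}|v|$ and $v+\max_{\T^n}|v|$ are respectively a subsolution and a supersolution of \eqref{eq:general}, whence $|v^\ep|\le 2\max_{\T^n}|v|$ for all $\ep>0$. (The paper's own proof needs the same bound to make the $o(\ep)$ in \eqref{Taylor} legitimate, so you are not alone in leaving it implicit --- but your stated substitute for it does not work.) With that comparison argument inserted, your proof is complete.
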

\begin{proof}
Let $\gam \in C_c^\infty(\R^n)$ be a standard mollifier; that is,  $\gam \geq 0$,
$\supp \gam\subset\ol{B}(0,1)$ and 
$\|\gam\|_{L^{1}(\R^n)}=1$. 
For each $\eta>0$, set $\gam^\eta(y):=\eta^{-n} \gam(\eta^{-1}y)$ for $y\in \R^n$ and define
\[
\psi^\eta(x):=v^\ep\ast\gam^\eta(x)=\int_{\R^n} v^\ep(x-y)\gam^\eta(y) \,dy.
\]
Because  $G$ is convex and $\|Dv^\ep\|_{L^\infty}$ is bounded uniformly in $\ep$, we have 
\begin{equation}\label{convo}
G(x,D\psi^\eta) \leq G(x,Dv^\ep)+C\eta.
\end{equation}
Using Taylor's expansion, we get
\begin{equation}\label{Taylor}
f(x,\ep u^\ep)=f(x,0)+\ep f_r(x,0)u^\ep+o(\ep) \quad\text{as} \ \ep\to0.
\end{equation}
Using \eqref{convo}, \eqref{Taylor}, and the convexity of $G$, we obtain
\begin{align*}
0=&\,
f(x,\ep u^\ep)+G(x,Du^\ep)
\ge f(x,0)+\ep f_r(x,0) u^\ep+G(x,D\psi^\eta)-C\eta-o(\ep)\\
\ge&\,
f(x,0)+\ep f_r(x,0) u^\ep + G(x,p)+D_pG(x,p)\cdot(D\psi^\eta-p)-C\eta-o(\ep).  
\end{align*}
Next, we integrate the preceding inequality with respect to $d\nu(x,v)$ for $\nu\in\cM$ and use  properties (i) and (ii) of Proposition \ref{prop:mather-property} to conclude that $$
\iint_{\T^n \times \R^n}  f_r(x,0)u^\ep(x)\,d\nu(x,v) \leq \frac{C\eta}{\ep}+o(1).
$$
Finally, we let $\eta \to 0$ to achieve the desired result.
\end{proof}

\begin{lem}\label{lem:key2}
Suppose that {\rm(A1)-(A3)} hold. Let $w \in C(\T^n)$ be a subsolution of {\rm(GE)}. Then, we have 
\[
u^{\ep}(x_0) 
\geq w(x_0)-\iint_{\T^n\times\R^n} f_r(x,0)w(x) \,d\nu^{\ep}(x,p)
+o(1) \quad\text{as} \ \ep\to0
\] 
for all $x_0\in\T^n$, 
where $\nu^{\ep}$ is a weak limit in the sense of measures of a subsequence of $\nu^{\ep,\eta}$ as $\eta \to 0$.
\end{lem}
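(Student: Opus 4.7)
The strategy closely parallels Lemma \ref{lem:key1}, but now we compare $v^{\ep,\eta}$ against a mollified subsolution via the adjoint equation \eqref{aj}. Let me write $v^\ep$ instead of the $u^\ep$ appearing in the statement (they coincide in this section).

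First, I would regularize $w$. Since $G$ is coercive, every continuous viscosity subsolution $w$ of \eqref{ge} is Lipschitz, so $|Dw|\le L$ almost everywhere and $f(x,0)+G(x,Dw(x))\le 0$ a.e. Setting $w^\eta:=w\ast \gamma^\eta$ with the mollifier from the proof of Lemma \ref{lem:key1}, convexity of $G$ in $p$ combined with the $C^1$-regularity of $G$ yields, in the same spirit as \eqref{convo},
\[
G(x,Dw^\eta(x)) \le \int G(x,Dw(x-y))\gam^\eta(y)\,dy
\le \int G(x-y,Dw(x-y))\gam^\eta(y)\,dy + C\eta,
\]
so that, after using continuity of $f(\cdot,0)$,
\[
f(x,0)+G(x,Dw^\eta(x)) \le C\eta \qquad \text{in } \T^n.
\]

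Next, I subtract this from the equation \eqref{ap} satisfied by $v^{\ep,\eta}$. Using the Taylor expansion \eqref{Taylor} for $f(x,\ep v^{\ep,\eta})$ and the convexity inequality $G(x,Dv^{\ep,\eta})-G(x,Dw^\eta)\le D_pG(x,Dv^{\ep,\eta})\cdot(Dv^{\ep,\eta}-Dw^\eta)$, the resulting pointwise inequality becomes
\[
D_pG(x,Dv^{\ep,\eta})\cdot D(v^{\ep,\eta}-w^\eta) - \eta^2\Del(v^{\ep,\eta}-w^\eta)
\ge -\ep f_r(x,0)v^{\ep,\eta} - C\eta - o(\ep) + \eta^2 \Del w^\eta .
\]
Then I multiply by $\theta^{\ep,\eta}$, integrate over $\T^n$, and integrate by parts twice on the left-hand side. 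The adjoint equation \eqref{aj} rewritten as $-\Div(D_pG\,\theta^{\ep,\eta})-\eta^2\Del\theta^{\ep,\eta}=-\ep f_r(x,0)\theta^{\ep,\eta}+\ep\del_{x_0}$ produces exactly the point evaluation $\ep(v^{\ep,\eta}(x_0)-w^\eta(x_0))$, together with a term $-\ep\int f_r(x,0)(v^{\ep,\eta}-w^\eta)\theta^{\ep,\eta}\,dx$. After cancellation of the terms involving $v^{\ep,\eta}$, dividing by $\ep$ yields
\[
v^{\ep,\eta}(x_0)-w^\eta(x_0) \ge -\int_{\T^n} f_r(x,0)w^\eta(x)\theta^{\ep,\eta}(x)\,dx - o(1) - \frac{C\eta}{\ep} + \frac{\eta^2}{\ep}\int_{\T^n}\Del w^\eta\,\theta^{\ep,\eta}\,dx.
\]

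Finally, I send $\eta\to 0$ along the subsequence giving $\nu^{\ep,\eta}\rightharpoonup \nu^\ep$. The term $C\eta/\ep$ vanishes because $\eta$ is independent of $\ep$ (or one may choose $\eta=\eta_k\ll\ep$). The Bochner-type term is controlled by $|\eta^2\int\Del w^\eta\,\theta^{\ep,\eta}|\le \eta^2(C/\eta)\|\theta^{\ep,\eta}\|_{L^1}\le C\eta$ because $\|D^2 w^\eta\|_\infty\le C/\eta$ and $\theta^{\ep,\eta}$ has bounded mass, so it also vanishes. Uniform convergence $v^{\ep,\eta}\to v^\ep$ and $w^\eta\to w$ handles the left-hand side, while the integral on the right converges to $\iint_{\T^n\times\R^n} f_r(x,0)w(x)\,d\nu^\ep(x,p)$ by \eqref{def-nu-ep} and \eqref{mather}, giving the claim.

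\textbf{Main obstacle.} The delicate part is the mollification step: $w$ is only a viscosity subsolution, and I must turn the subsolution property into a pointwise almost-subsolution inequality for $w^\eta$ with an error that is $o(\ep)$ after the eventual division by $\ep$. Convexity of $G$ is what makes this possible; without it, one cannot pass from an a.e.\ inequality for $Dw$ to an inequality for $Dw^\eta$. The rest is bookkeeping of the nonlinear adjoint machinery already deployed in the previous subsection.
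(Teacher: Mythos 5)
Your proposal is correct and follows essentially the same route as the paper's proof: mollify the subsolution to get the approximate pointwise inequality $f(x,0)+G(x,Dw^\eta)\le \eta^2\Delta w^\eta + C\eta$, subtract from the Taylor-expanded regularized equation, linearize $G$ by convexity, pair with the adjoint variable $\theta^{\ep,\eta}$ to produce the point evaluation at $x_0$, and then send $\eta\to 0$ along the subsequence defining $\nu^\ep$ so that the $C\eta/\ep$ and $\eta^2\Delta w^\eta$ error terms vanish. The only (immaterial) difference is bookkeeping: the paper absorbs $\eta^2\Delta w^\eta$ into the mollified subsolution inequality at the outset, whereas you carry it to the end and bound it by $C\eta/\ep$ there.
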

\begin{proof}
For $\eta>0$, let $w^{\eta}:=w\ast\gam^{\eta}$, where $\gam^{\eta}$ is a mollifier as 
in the proof of Lemma \ref{lem:key1}.  
Because $\eta^2|\Del w^\eta|\le C\eta$, we obtain 
\begin{equation}\label{w-eta}
f(x,0)+G(x,Dw^\eta) \leq \eta^2 \Del w^\eta +C \eta \qquad \text{in} \ \T^n. 
\end{equation}
Now, using  \eqref{Taylor}, we rewrite (A)$_\ep^\eta$ as 
\begin{equation}\label{eq:A}
f(x,0)+\ep f_r(x,0)u^{\ep,\eta}+o(\ep)+G(x,Du^{\ep,\eta}) 
=\eta^2\Del u^{\ep,\eta}. 
\end{equation}
Next, 
we subtract \eqref{w-eta} from \eqref{eq:A} to get
\begin{align*}
&C \eta+\ep f_r(x,0)u^{\ep,\eta}+o(\ep)
\ge
G(x,Dw^\eta)-G(x,Du^{\ep,\eta})-\eta^2 \Del(w^\eta-u^{\ep,\eta})\\
\ge&\, 
D_pG(x,Du^{\ep,\eta})\cdot D(w^\eta-u^{\ep,\eta})-\eta^2\Del(w^\eta-u^{\ep,\eta}). 
\end{align*}
Multiplying the preceding inequality by a solution, $\theta^{\ep,\eta}$, of  (AJ)$_{\ep}^{\eta}$, integrating on $\T^n$, 
and using integration by parts, we get 
\begin{align*}
&\int_{\T^n}\left(C \eta+\ep f_r(x,0)u^{\ep,\eta}+o(\ep)\right)\theta^{\ep,\eta}\,dx\\
\ge&\, 
-\int_{\T^n}\big(\Div (D_pG \theta^{\ep,\eta})+\eta^2\Del\theta^{\ep,\eta}\big)(w^\eta-u^{\ep,\eta})\,dx\\
=&\, 
\ep\int_{\T^n}\left(\del_{x_0}-f_r(x,0)\theta^{\ep,\eta}\right)(w^\eta-u^{\ep,\eta})\,dx. 
\end{align*}
Next, we rearrange the previous estimate and get
\[
u^{\ep,\eta}(x_0) 
\geq w^\eta(x_0)-\iint_{\T^n\times\R^n} f_r(x,0)w^\eta \,d\nu^{\ep,\eta}(x,p)
-\frac{C\eta+o(\ep)}{\ep}\iint_{\T^n\times\R^n} \,d\nu^{\ep,\eta}(x,p). 
\]
Finally, we set $\eta=\eta_{k}\to0. $ Thus,  $\nu^{\ep,\eta_k}\rightharpoonup \nu^{\ep}$ as measures. Taking the limit in the preceding inequality 
ends the proof.
\end{proof}

\subsection{Convergence} \label{subsec:conv1}
Here, we prove the selection theorem for  \eqref{eq:general}, Theorem \ref{thm:main}. This theorem substantially  extends the existing results for convex Hamiltonians and is the key technical device in the study of quasi-convex and double-well Hamiltonians.  

\begin{proof}[Proof of Theorem {\rm\ref{thm:main}}]
Let $\{\ep_j\}_{j\in\N}$ be any subsequence converging to $0$ such that $v^{\ep_j}$ 
 converges uniformly to a solution of \eqref{ge} as $j\to\infty$.
In view of Lemma \ref{lem:key1} and the definition of  $v^0$, we have that 
\begin{equation}\label{eq:m1}
v^0\ge \lim_{j\to\infty} v^{\ep_j}.
\end{equation}
Moreover, by Lemma \ref{lem:key2}, we get  
\[
\lim_{j\to\infty} v^{\ep_j}(x)\ge w(x)-\iint_{\T^n\times\R^n}f_r(x,0)w(x)\,d\nu(x,p)
\]
for any subsolution, $w$, of \eqref{e}. Next, we take $w=v^0$. Accordingly, using the definition of  $v^0$ again, 
we obtain 
\begin{equation}\label{eq:m2}
\lim_{j\to\infty} v^{\ep_j}\ge v^0.
\end{equation}
Thus, we combine \eqref{eq:m1} and \eqref{eq:m2} to get the desired result.
\end{proof}


\section{Strictly quasi-convex Hamiltonians}\label{sec:nonconvex}
Now, we use the results in the preceding section to investigate the selection problem for  strictly quasi-convex Hamilton-Jacobi equations and to prove Theorem
\ref{thm:quasi}.
For convenience, we assume that $\overline{H} =0$ in \eqref{e}. 

\begin{lem}\label{lem:exp}
A function, $u^\ep\in C(\T^n),$ solves \eqref{de} if and only if $u^\ep$ solves  \eqref{eq:general} 
for  $f(r)=-e^{-\lam_0 r}$,  $r \in \R  $, and $G$\ as in Assumption
  {\rm\ (A5).}
\end{lem}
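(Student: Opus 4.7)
The plan is to verify the equivalence directly by exploiting the strict monotonicity of the exponential. With $f(r)=-e^{-\lam_0 r}$ and $G(x,p)=e^{\lam_0 H(x,p)}$, equation \eqref{eq:general} reads pointwise
\[
-e^{-\lam_0 \ep u^\ep}+e^{\lam_0 H(x,Du^\ep)}=0,
\]
which, after taking the logarithm of both sides of $e^{\lam_0 H(x,Du^\ep)}=e^{-\lam_0 \ep u^\ep}$ and dividing by $\lam_0$, is the same pointwise relation as $\ep u^\ep + H(x,Du^\ep) = 0$. So for classical solutions the equivalence is immediate.

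For the viscosity interpretation, I would argue as follows. Suppose $u^\ep$ is a viscosity subsolution of \eqref{de}, let $\varphi \in C^1(\T^n)$, and suppose $u^\ep-\varphi$ has a local maximum at $x_0 \in \T^n$. By definition, $\ep u^\ep(x_0)+H(x_0,D\varphi(x_0)) \le 0$, i.e., $H(x_0,D\varphi(x_0)) \le -\ep u^\ep(x_0)$. Applying the strictly increasing continuous map $s \mapsto e^{\lam_0 s}$ yields $e^{\lam_0 H(x_0, D\varphi(x_0))} \le e^{-\lam_0 \ep u^\ep(x_0)}$, which is exactly the subsolution condition for \eqref{eq:general} with the stated $f$ and $G$. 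Conversely, assuming that $u^\ep$ is a viscosity subsolution of \eqref{eq:general} and using the same test function, one gets $e^{\lam_0 H(x_0, D\varphi(x_0))} \le e^{-\lam_0 \ep u^\ep(x_0)}$; applying the inverse map $s \mapsto \lam_0^{-1}\log s$ (which is well defined and strictly increasing on $(0,\infty)$) recovers $\ep u^\ep(x_0)+H(x_0,D\varphi(x_0))\le 0$. The supersolution case is identical with all inequalities reversed and test functions that touch from below.

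There is essentially no technical obstacle here: the argument is a textbook application of the fact that composing a viscosity equation with a strictly increasing continuous function of the equation's value does not change the viscosity sub/supersolution structure. The only point worth emphasizing is that the transformation is globally monotone (not just near a particular level set), so one does not need to worry about domains of invertibility or sign restrictions, and the equivalence holds on all of $\T^n$ for every $\ep>0$.
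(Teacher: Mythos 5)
Your proof is correct and follows essentially the same route as the paper: both arguments reduce the equivalence to applying the globally strictly increasing map $s\mapsto e^{\lam_0 s}$ (and its inverse) to the pointwise inequality defining viscosity sub- and supersolutions, the only cosmetic difference being that the paper phrases this via the super/subdifferentials $D^{\pm}u^\ep(x)$ while you use test functions.
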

\begin{proof}
Clearly, $u^\ep$ is a subsolution of \eqref{de} if and only if  for any $x\in \T^n$ and any $p \in D^+ u^\ep(x)$, we have
\begin{equation}\label{exp-1}
\ep u^\ep(x) + H(x,p)\leq 0.
\end{equation}
Moreover,  \eqref{exp-1} holds if any only if 
\begin{equation}\label{exp-2}
-e^{-\ep u^\ep(x)} + G(x,p)\leq 0.
\end{equation}
Arguing in a similar way for the supersolution case gives the result.      
\end{proof}

Theorem \ref{thm:quasi} is an immediate corollary of Theorem \ref{thm:main}. The proof of Theorem \ref{thm:quasi} follows. 

\begin{proof}[Proof of Theorem {\rm\ref{thm:quasi}}]
 By the preceding Lemma, $u^\ep$ solves \eqref{eq:general}.
It is clear that $f, G$ satisfy (A1)--(A3).  
Thus, we apply  Theorem \ref{thm:main} to obtain the last statement of the theorem. 
\end{proof}


\section{One-dimensional, non-convex, double-well Hamiltonians}
\label{sec:dw}
 
For each $P\in \R$, we consider the discounted Hamilton-Jacobi equation
\begin{equation}\label{eq:nonconvex}
\ep u^\ep+(|P+u^\ep_x|^2-1)^2 - V(x)=0 \quad\text{in} \ \T.
\end{equation}
As before, $\lim_{\ep \to 0} \ep u^\ep=-\ol{H}(P)\in\R$, where $\ol{H}(P)$ is the unique constant for which 
\[
(|P+u_x|^2-1)^2 - V(x)=\ol{H}(P) \quad\text{in} \ \T
\]
has a viscosity solution. 

Assumption \eqref{assume:V} means that  $V$ has a small oscillation; that is, $\osc(V):=\max_{\T}V-\min_{\T}V<1$.
Because the wells of $(|p|^2-1)^2$ have depth $1$, which is larger than $\osc(V)$, the effect of $V$\ on 
$\ol{H}(P)$ is localized. Moreover, from the results in \cite{ATY1}, the graph of $\ol{H}(P)$ follows and in shown in Figure \ref{Fig1}.
\begin{figure}
\includegraphics[width=70mm]{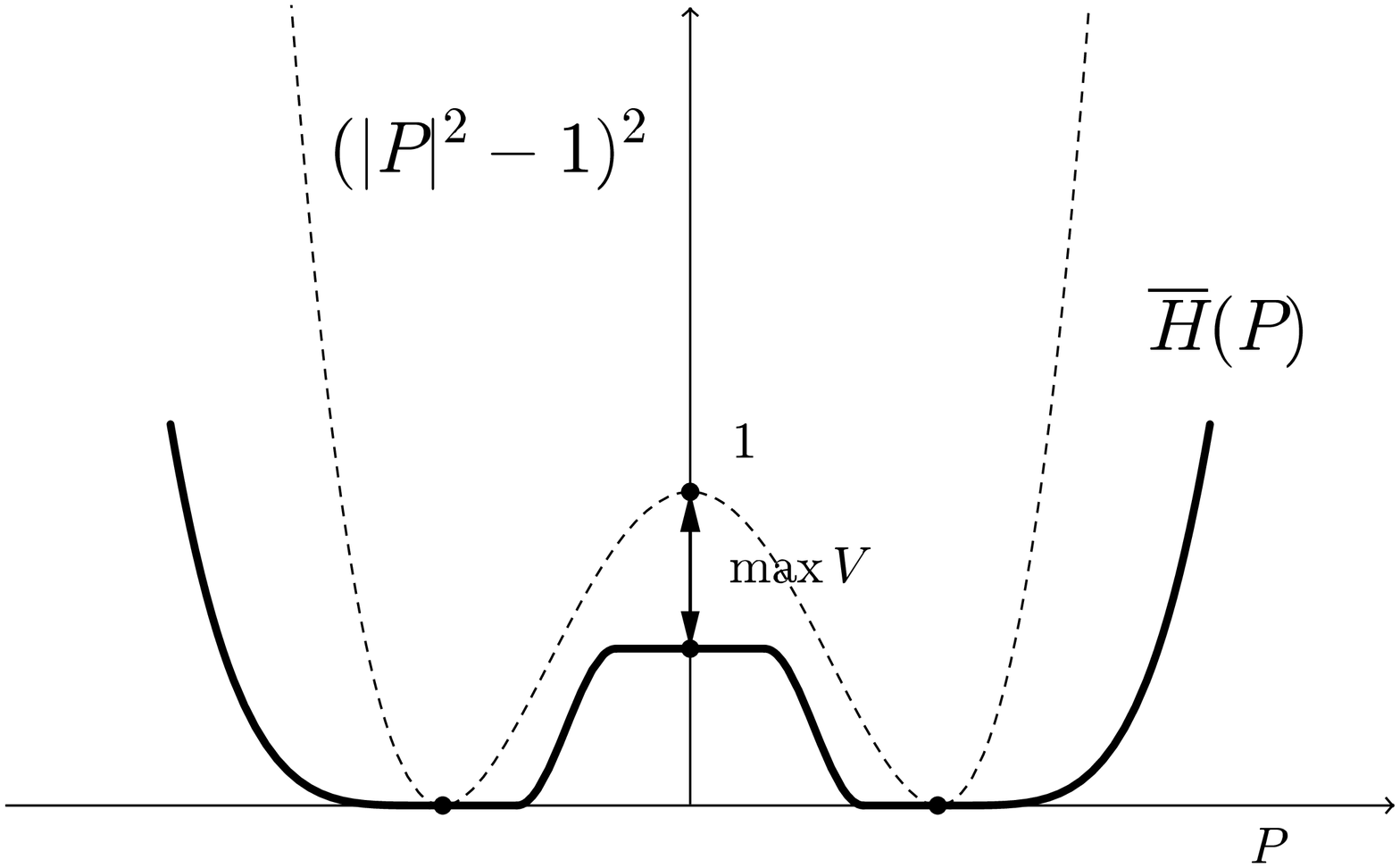}
\caption{The shape of $\ol{H}(P)$}\label{Fig1}
\end{figure}
As suggested by  Figure \ref{Fig1}, to prove Theorem \ref{thm:double-well}, we separately consider different cases 
according to the region where $P$\ lies.
We have the following a priori estimates that are essential in the proof of Theorem \ref{thm:double-well}.

\begin{prop}\label{prop:apriori}
Let $u^\ep$ solve \eqref{eq:nonconvex}. Consider the following three cases:
\begin{equation}
\label{cases}
{\rm(a)} \  |P|<1, \ \ol{H}(P)>0, \quad
{\rm(b)} \ |P|>1, \ \ol{H}(P)>0, \quad
{\rm(c)} \ \ol{H}(P)=0. 
\end{equation}
Then, in the viscosity sense,  we have,
\begin{itemize}
\item[{\rm (i)}] In case {\rm(a)},  for $\ep>0$ sufficiently small,
$|P+u^\ep_x|\le 1$ in $\T$. 
\item[{\rm (ii)}] In case {\rm(b)},  if $P>1$ then $P+u^\ep_x \ge 1$ in $\T$ for sufficiently small $\ep>0$ .
 If $P<-1,$ then $P+u^\ep_x \leq -1$ in $\T$ for sufficiently small $\ep>0$.
\item[{\rm (iii)}] In case {\rm(c)},  if $P>0$, then $P+u^\ep_x\ge 0$ in $\T$ for sufficiently small $\ep>0$.
If $P<0$, then $P+u^\ep_x\le 0$ in $\T$ for sufficiently small $\ep>0$. 
\end{itemize}
\end{prop}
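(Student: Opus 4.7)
My plan rests on three ingredients. First, the coercivity of the Hamiltonian in \eqref{eq:nonconvex} and the standard comparison principle give a uniform Lipschitz bound on $\{u^\ep\}_{\ep>0}$ and the uniform convergence $\ep u^\ep\to -\ol{H}(P)$ on $\T$. Substituting into \eqref{eq:nonconvex}, any $p\in D^+u^\ep(x)$ (respectively, any $p\in D^-u^\ep(x)$) satisfies
\[
\bigl(|P+p|^2-1\bigr)^2 \le V(x)+\ol{H}(P)+o(1) \qquad \bigl(\text{resp.\ }\ge\bigr).
\]
In cases (a) and (b), $\ol{H}(P)>0$ keeps the right-hand side bounded below by a positive constant for $\ep$ small, so the admissible values of $|P+p|^2$ cluster near the two strictly separated branches $\beta_\pm^\ep(x):=1\pm\sqrt{V(x)+\ol{H}(P)+o(1)}$; in case (c) the branches collapse to $|P+p|^2 = 1\pm\sqrt{V(x)}$. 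Claims (i)--(iii) are assertions that $P+u^\ep_x$ selects the correct branch and sign in the viscosity sense.

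To effect the branch selection, I would construct periodic viscosity sub- and super-solutions $w^\pm$ of \eqref{eq:P-cell} (and of its $\pm o(1)$ perturbation) whose derivatives lie on the desired branch and have the sign prescribed by the claim, and then invoke comparison against $u^\ep+\ol{H}(P)/\ep$. Such barriers are pieced together from $C^1$ solutions of the branch ODE $P+w_x=\sig\sqrt{1+\tau\sqrt{V(x)+\ol{H}(P)}}$, with $(\sig,\tau)\in\{\pm1\}^2$ chosen from the admissible set, glued at kinks that are compatible with the viscosity one-sided inequalities (downward corners for subsolutions, upward corners for supersolutions). The qualitative shape of $\ol{H}(P)$ in Figure \ref{Fig1} from \cite{ATY1} guarantees that closed periodic orbits of the correct type exist in each regime: in case (a) the sign $\sig$ flips at kinks located near a maximizer of $V$; in case (b) the sign of $P+w_x$ is constant and equal to $\sgn P$; and in case (c) flips are permitted only at zeros of $V$. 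Absorbing the $o(1)$ error by an arbitrarily small perturbation of the barrier, the comparison traps $P+u^\ep_x$ on the prescribed branch in the viscosity sense, giving (i)--(iii) as $\ep\to 0$.

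The main obstacle is the careful construction of the barriers $w^\pm$. Because the Hamiltonian is non-convex, many kink configurations are formally compatible with the one-sided viscosity conditions, and one must verify that the chosen kink locations, signs $\sig$, and branch indices $\tau$ (i) close up into a periodic function on $\T$, (ii) are of the correct one-sided type at every corner, and (iii) place $P+w^\pm_x$ on the branch dictated by the claim. Equivalently, the Aubry sets of $w^\pm$ must match the localization of the Mather set of the ergodic PDE, which is governed by the ``Mexican hat'' structure of $\ol{H}$. Once these barriers are in place, the remaining comparison argument is routine.
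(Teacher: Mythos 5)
There is a genuine gap at the heart of your plan: the step ``the comparison traps $P+u^\ep_x$ on the prescribed branch in the viscosity sense'' does not follow from sandwiching $u^\ep$ (or $u^\ep+\ol{H}(P)/\ep$) between periodic barriers $w^\pm$. The comparison principle for \eqref{eq:nonconvex} yields only the pointwise bounds $w^-\le u^\ep\le w^+$; it gives no control on which branch $D^\pm u^\ep$ selects in between. Even if $w^\pm$ are built so that $P+w^\pm_x$ lies on the desired branch everywhere, a function squeezed between them can still have sub- and superdifferentials on the wrong branch at interior points (constant barriers already illustrate that $L^\infty$ trapping carries no gradient information). For a convex Hamiltonian one can sometimes convert such barriers into gradient constraints via the optimal-control representation, but that tool is unavailable here, and the whole difficulty of the proposition is precisely the non-convex branch selection. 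A second, smaller imprecision: the subsolution inequality $(|P+p|^2-1)^2\le V(x)+\ol{H}(P)+o(1)$ for $p\in D^+u^\ep(x)$ constrains $|P+p|^2$ to the full interval $\bigl[1-\sqrt{\cdot}\,,\,1+\sqrt{\cdot}\,\bigr]$, not to the two extreme branches; only the equation at points of differentiability pins the gradient to a branch, so ``clustering near the branches'' is not automatic either.

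What is actually needed is a \emph{local} touching argument, which is how the paper proceeds. For cases (a) and (b) one assumes a point $z$ of differentiability where the claimed inequality fails and adds a cone: e.g.\ in case (a) the function $f(x)=u^\ep(x)+Px+|x-z|$ attains a minimum at some $x_0$ (by $|P|<1$), and a Taylor expansion using $|P+u^\ep_x(z)|>1$ shows $x_0\ne z$; at $x_0$ the element $p=-P-\sgn(x_0-z)$ lies in $D^-u^\ep(x_0)$ and satisfies $|P+p|=1$ exactly, so the supersolution inequality collapses to $V(x_0)\le \ep u^\ep(x_0)\to-\ol{H}(P)<0$, a contradiction. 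Case (c) is different again and your ``flips only at zeros of $V$'' heuristic does not capture it: since $\ol{H}(P)=0$ one cannot separate the branches, and the paper instead combines periodicity (if $P+u^\ep_x\le 0$ a.e.\ then $0=\int_0^1 u^\ep_x\le -P<0$) with an intermediate-value property of superdifferentials (\cite[Lemma 2.6]{ATY2}) producing a point $x_3$ with $0\in P+D^+u^\ep(x_3)$, whence the subsolution inequality gives $\ep u^\ep(x_3)+1-V(x_3)\le 0$, impossible because $\max V<1$. I would recommend abandoning the global-barrier scheme and reworking the proof along these localized lines.
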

\begin{proof}
First, we extend  $u^\ep$ periodically to $\R$.

To prove (i), we argue by contradiction.
Suppose that there exists $z\in\R$ such that $u^\ep$ is differentiable at $z$ and $|P+u^\ep_x(z)|>1$. 
Define  $f:\R\to\R$ by 
\[
f(x)=u^{\ep}(x)+P x+|x-z|. 
\]
Because $|P|<1$, there exists $x_0\in\R$ such that 
$f(x_0)=\min_{x\in\R}f(x)$ (see Figure \ref{barrier}, case (a)). Next, we prove that $x_0\not=z$. 
Indeed, by setting $q=P+u^\ep_x(z)$, we obtain 
\begin{align*}
f(z-\al q)
&=\, 
P(z-\al q)+u^\ep(z-\al q)+\al|q|\\
&=\, 
P(z-\al q)+u^\ep(z)-\al u^\ep_x(z) q+o(\al)+\al|q|\\
&=\, 
f(z)-\al\left(|q|(|q|-1)+\frac{o(\al)}{\al}\right), 
\end{align*}
which implies $f(z-\al q)<f(z)$ for a small $\al>0$ since $|q|>1$. Thus, $x_0\neq z$.

Because $ u^{\ep}$ is a viscosity supersolution  of  \eqref{eq:nonconvex} at $x=x_0$, we have
\[
V(x_0)\le\ep u^{\ep}(x_0)+\left(\left|P-\left(P+\frac{x_0-z}{|x_0-z|}\right)\right|^2-1\right)^2=\ep u^{\ep}(x_0). 
\] 
Since $\ep u^{\ep}(x_0)\to-\ol{H}(P)<0$, the above inequality yields a contradiction for small $\ep>0$. 
Thus, (i) holds.
\smallskip

Next, we prove (ii). We consider only the case when $P>1$ and argue by contradiction. If $P<-1$, the argument is analogous.  
Suppose that there exists $z\in\R$ such that 
$u^\ep$ is differentiable at $z$ and $P+u^\ep_x(z)<1$. 
Define the function $f:\R\to\R$ by 
\[
f(x):=u^{\ep}(x)+P x-|x-z|. 
\]
Because $P>1$, there exists $x_0\in [z,\infty)$ such that 
$f(x_0)=\min_{x\in [z,\infty)}f(x)$ (see Figure \ref{barrier}, case (b)). 
First,  we prove that $x_0>z$. We start by setting $q=P+u^\ep_x(z)$. Then, we have
\begin{align*}
f(z+\al)
&=\, 
P(z+\al)+u^\ep(z+\al)-\al\\
&=\, 
P(z+\al)+u^\ep(z)+\al u^\ep_x(z)+o(\al)-\al\\
&=\, 
f(z)+\al\left(q-1+\frac{o(\al)}{\al}\right). 
\end{align*}
Because $q<1$, the preceding identity implies that  $f(z+\al)<f(z)$ for a small $\al>0.$ Hence, $x_0>z$. Consequently, by the argument of the last part of the proof of (i), 
we get a contradiction. 
\begin{figure}[htbp]
  \begin{center}
    \begin{tabular}{c}
\hspace*{-36pt}
      \begin{minipage}{0.33\hsize}
        \begin{center}
          \includegraphics[clip, width=6cm]{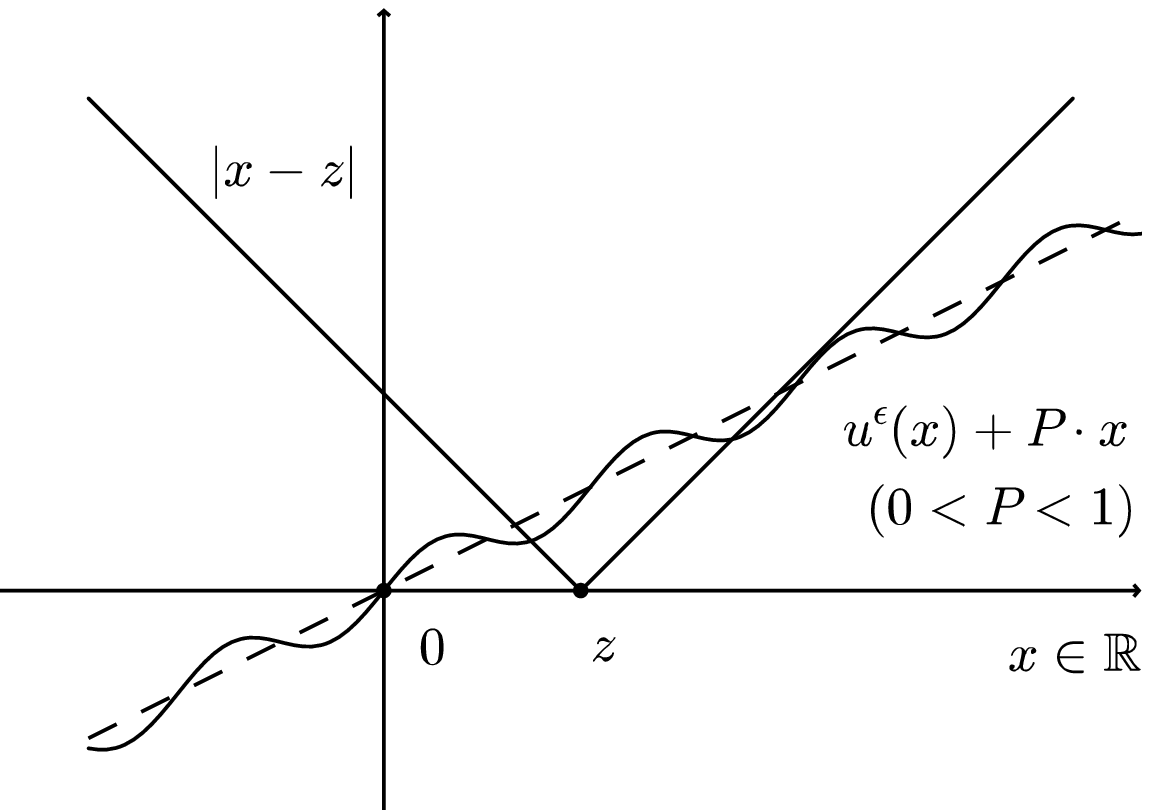}
        \hspace{1.6cm} {\bf Case (a)} 
        \end{center}
      \end{minipage}
\hspace*{48pt}
      \begin{minipage}{0.33\hsize}
        \begin{center}
          \includegraphics[clip, width=6cm]{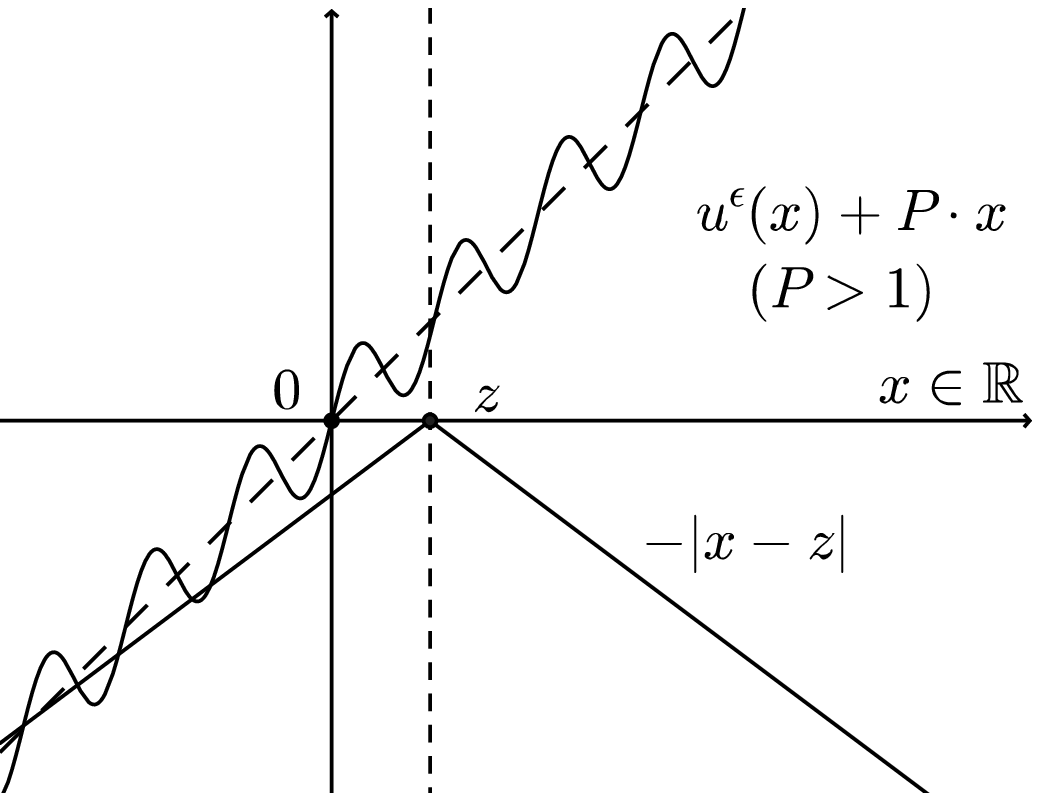}
         \hspace{1.6cm} {\bf Case (b)} 
        \end{center}
      \end{minipage}
    \end{tabular}
    \caption{}
    \label{barrier}
  \end{center}
\end{figure}

Finally, we prove (iii). We consider only the case when $P>0$ and prove  that $P+u_x^\ep\ge0$ in $\T$. The case when $P<0$ is analogous.

The proof proceeds by contradiction. First, if $P+u_x^{\ep}\le0$ for almost everywhere $x\in \T$, then 
\[
0=u^\ep(1)- u^\ep(0)= \int_0^1 u^\ep_x(x)\,dx \leq \int_0^1 (-P)\,dx= -P<0,
\]
which is a contradiction.
Therefore, we need to consider only the case when there exists 
$x_1, x_2\in\R$ with $x_1\not=x_2$ such that $u^\ep$ is differentiable at $x_1, x_2$ and 
\[
P+u_x^{\ep}(x_1)>0> P+u_x^{\ep}(x_2). 
\]
We can assume that $x_1<x_2$ without loss of generality. Otherwise, we replace $x_2$ by $x_2+k$ for some large enough $k \in \N$.

In view of \cite[Lemma 2.6]{ATY2}, there exists $x_3\in(x_1,x_2)$ such that 
$0\in P+D^{+}u^\ep(x_3)$. By the definition of the viscosity subsolution, 
we have 
\[
\ep u^\ep(x_3)+ |0^2-1|^2 - V(x_3) = \ep u^\ep(x_3)+1 - V(x_3) \leq 0, 
\]
which is a contradiction for sufficiently small $\ep>0$ s $\lim_{\ep \to 0} \ep u^\ep(x_3)=0$ and 
$\max V<1$. 
\end{proof}

\begin{rem}
By inspecting the proof of case (a)\ of the preceding proposition,  we see that the argument extends to arbitrary dimensions. 
In contrast, the proofs of the other two cases are one dimensional in nature;  we do not know how to generalize
them for higher dimensions. 
\end{rem}

Finally, we present the 
proof of Theorem \ref{thm:double-well}.

\begin{proof}[Proof of Theorem {\rm\ref{thm:double-well}}]
First, we  use Proposition \ref{prop:apriori} to transform \eqref{eq:nonconvex}  into \eqref{eq:general}. Then, we proceed as follows. 
We set $v^\ep := u^\ep +\ol{H}(P)/\ep$.  Thus, the ergodic constant becomes $0$.

In case (a), we use (i) of Proposition \ref{prop:apriori} to rewrite \eqref{eq:nonconvex} as 
\[
-\sqrt{V(x)+\ol{H}(P)-\ep v^\ep}-|P+v_x^\ep|^2+1=0\quad\text{in} \ \T. 
\]
Next, we set $G(x,p):=-|P+p|^2+1$.  Then, $G$ is concave (not convex) 
in $p$. This concavity, however, presents no additional difficulty because the proof of Theorem \ref{thm:main} can be adapted  easily to get the convergence. 

In case (b), we consider only the case when $P>1$ as the case when $P<-1$ is similar. In light of (ii) in Proposition \ref{prop:apriori},  we rewrite \eqref{eq:nonconvex} as 
\[
-\sqrt{V(x)+\ol{H}(P)-\ep v^\ep}+|P+v_x^\ep|^2-1=0\quad\text{in} \ \T. 
\]
Here, a direct application of Theorem \ref{thm:main} implies 
the convergence of $v^\ep$ as $\ep\to0$.

Finally, in case (c), we consider only the case when $P>0$. 
Because $P+u^\ep_x \ge 0$ in $\T$, only the positive branch ($p\geq 0$) of the graph of $(|p|^2-1)^2$ plays a role here.
Note that this branch is quasi-convex and satisfies (A4). 
Therefore, Theorem \ref{thm:quasi} gives the convergence of $u^\ep$ as $\ep\to0$. 
\end{proof} 

\subsection{A further generalization in one dimension}
The argument in the proof of Theorem \ref{thm:double-well} can be adapted to handle the case when the oscillation of the potential energy, $V$, is smaller than the depth
of any well of the kinetic energy, $H$.  
Thus, we  have the convergence of the discounted approximation.
We can generalize this idea as follows. 
Consider a Hamiltonian of the form
\[
H(x,p)= F(p) - V(x), 
\]
where $F(p)$ is the kinetic energy and $V(x)$ is the potential energy.
Assume that $-\infty=p_0<p_1<p_2 < \ldots < p_{2L+1}<p_{2L+2}=+\infty$ exists for some $L\in\N$ 
such that
\begin{itemize}
\item $\lim_{|p| \to \infty} F(p)= +\infty$, 

\item $F'(p_i)=0$ and $F''(p_i) \neq 0$ for $1 \leq i \leq 2L+1$, 

\item $F'(p)>0$ for $p \in (p_{2i+1}, p_{2i+2})$ for $0 \leq 1 \leq L$, 

\item $F'(p)<0$ for $p \in (p_{2i}, p_{2i+1})$ for $0 \leq 1 \leq L$.
\end{itemize}
Set 
\[
m= \min_{1 \leq i \leq 2L} |F(p_i) - F(p_{i+1})|.
\]
Assume that 
\[
\osc(V) < m.
\]
Under these assumptions, we can prove that the solution $u^\ep$ of (D)$_\ep$ 
converges to a solution of (E), which generalizes Theorem \ref{thm:double-well}. 

\begin{rem}\label{rem:large-osc}
If the oscillation of $V$ is larger than $m$, we cannot localize the convergence argument.  The qualitative behavior of $\ol{H}$ was examined in  \cite{ATY2}.
However, the characterization of the  convergence of the discounted approximation remains an open problem.
In this setting, for some values of $P$, we see that due to the non-convex nature of the gradient jumps, the problem cannot be 
transformed into an equation of the form \eqref{eq:general}.

\end{rem}

\section{An example: A quasi-convex Hamiltonian with flat parts}\label{sec:ex}

In this last section, we study a selection problem for which the results in Sections \ref{sec:general} and \ref{sec:nonconvex} do not apply.
We consider a continuous, piecewise $C^1$, quasi-convex Hamiltonian  that  has a level set with a flat part. Thus, (A4) does not hold and, therefore, we need an alternative approach. 

Assume that $n=1$.  For $p \geq 0$, let 
\[
F(p)=
\begin{cases}
p \quad &\text{for} \ 0 \leq p \leq 1,\\
1 \quad &\text{for} \ 1 \leq p \leq 2,\\
p-1 \quad &\text{for} \ p\geq 2.  
\end{cases}
\]
Consider the Hamiltonian
\begin{equation}\label{H-1d}
H(x,p)=F(|p|) - V(x),
\end{equation}
where $V$\ is as follows. First,  we select a sufficiently small $s>0$. Then, we 
 set
\begin{equation}\label{V-1d}
V(x)=
\begin{cases}
x \quad &\text{for} \ 0 \leq x \leq s,\\
2s-x \quad &\text{for} \ s \leq x \leq 2s,\\
0 \quad &\text{for} \ 2s\leq x \leq 1.
\end{cases}
\end{equation}

To study the effect of the flat part of $F$, we fix $P=3/2$.
For this value of $P$, we examine the  maximal subsolutions of \eqref{eq:P-cell}
and the discounted problem. 

\begin{figure}[htbp]
  \begin{center}
    \begin{tabular}{c}
\hspace*{-36pt}
      \begin{minipage}{0.33\hsize}
        \begin{center}
          \includegraphics[clip, width=6.5cm]{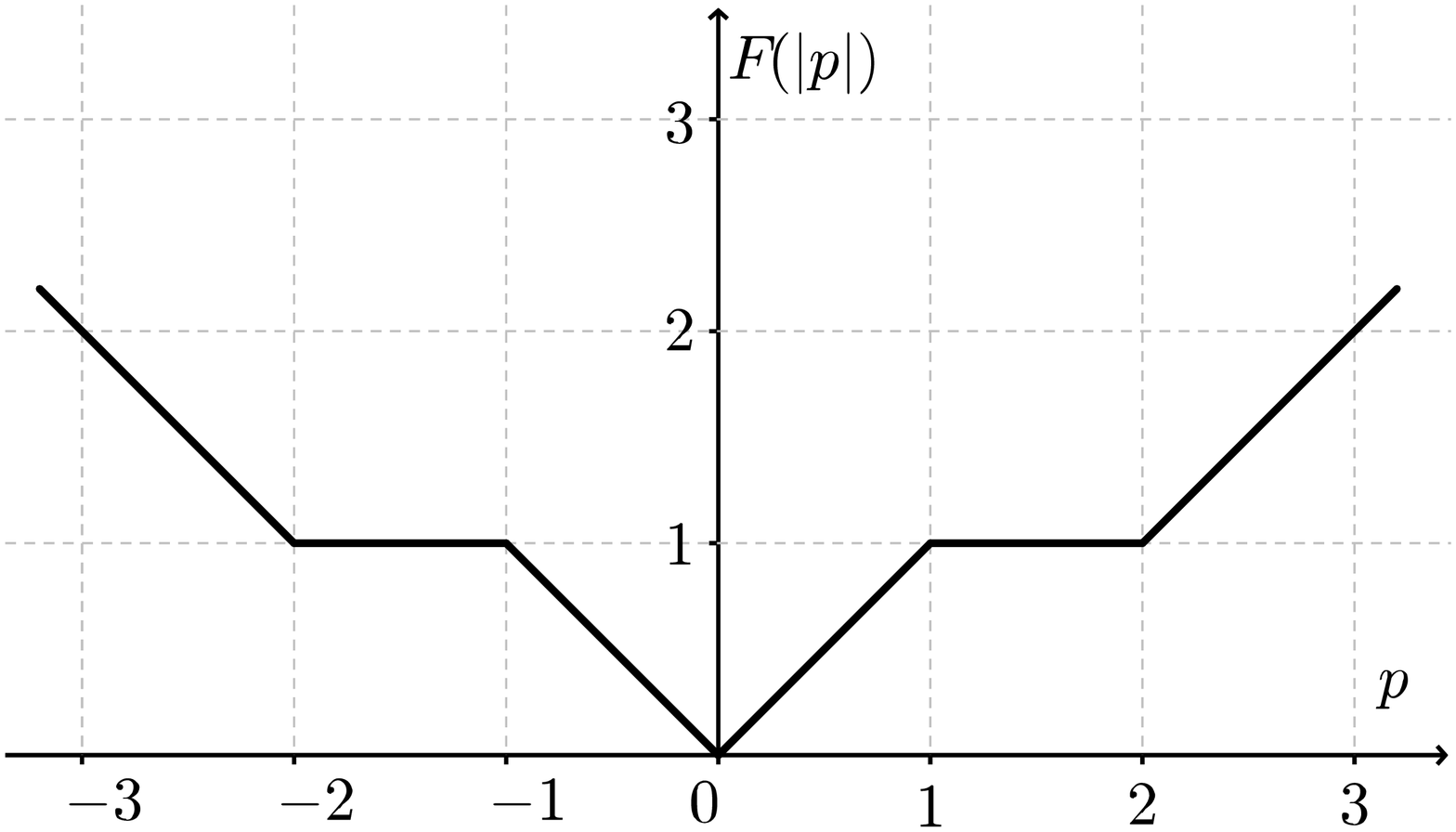}
        \hspace{1.6cm} Image of $F(|p|)$ 
        \end{center}
      \end{minipage}
\hspace*{48pt}
      \begin{minipage}{0.33\hsize}
        \begin{center}
          \includegraphics[clip, width=5cm]{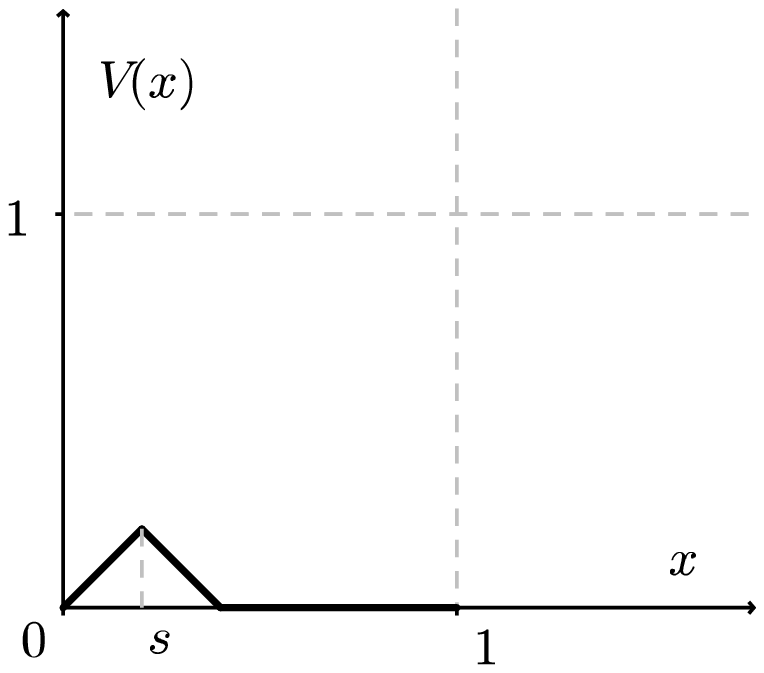}
         \hspace{1.6cm} Image of $V(x)$ 
        \end{center}
      \end{minipage}
    \end{tabular}
  \end{center}
\end{figure}

\subsection{Maximal subsolutions and the Aubry set at $P=3/2$}
First, we compute the effective Hamiltonian at $3/2$; that is, the unique value $\ol{H}(P)$ for which \eqref{eq:P-cell} has a viscosity solution. 
 
\begin{lem}\label{lem:flat-1}
Assume that \eqref{H-1d} and \eqref{V-1d} hold.
Then,   $\ol{H}(3/2)=1$.
\end{lem}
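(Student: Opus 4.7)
The plan is to sandwich $\overline{H}(3/2)$ between an explicit subsolution bound giving $\overline{H}(3/2) \leq 1$ and an integral-obstruction argument giving $\overline{H}(3/2) \geq 1$. For the upper bound, I would simply test the constant function $u \equiv 0$: since $F(3/2) = 1$ (the value $3/2$ lies inside the flat plateau of $F$) and $V \geq 0$, one has $F(|3/2|) - V(x) = 1 - V(x) \leq 1$ on $\T$, so $u \equiv 0$ is a smooth periodic subsolution of $F(|3/2 + Du|) - V(x) \leq 1$. By the standard characterization of the ergodic constant as the least $c$ admitting a periodic viscosity subsolution, this immediately gives $\overline{H}(3/2) \leq 1$.

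For the matching lower bound the key geometric input is the shape of the sublevel sets of $p \mapsto F(|p|)$ on $\R$: for every $a \geq 0$,
\[
\{p \in \R : F(|p|) \leq a\} = [-\rho(a),\rho(a)], \qquad \rho(a) = \begin{cases} a & \text{if } 0 \leq a < 1, \\ a+1 & \text{if } a \geq 1, \end{cases}
\]
where the unit jump at $a=1$ encodes the flat piece of $F$ on $[1,2]$. Given any Lipschitz periodic subsolution $u$ of $F(|3/2 + u'|) - V(x) \leq c$, this translates almost everywhere to $3/2 + u'(x) \leq \rho(c + V(x))$, and integrating over $\T$ with $\int_0^1 u'\,dx = 0$ yields the necessary condition $3/2 \leq \int_0^1 \rho(c + V(x))\,dx$.

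The lower bound then reduces to showing that this integral is strictly less than $3/2$ for every $c < 1$ once $s$ is small. I would split into two subcases. If $c \leq 1-s$, then $c + V(x) < 1$ throughout $\T$, so $\rho(c+V) = c+V$ and the right-hand side equals $c + s^2 < 3/2$. If $c \in (1-s, 1)$, setting $t = 1-c$ the set $\{V \geq t\}$ is an interval of length $2s - 2t$ inside the support of $V$, and a short piecewise calculation gives $\int_0^1 \rho(c+V(x))\,dx = 3c - 2 + s^2 + 2s$; requiring this to be $\geq 3/2$ forces $c \geq 7/6 - (s^2+2s)/3$, which exceeds $1$ as soon as $s^2 + 2s < 1/2$. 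Either way the assumption $c < 1$ is contradicted, so $\overline{H}(3/2) \geq 1$.

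The only place where I expect any delicacy is the sublevel-set description of $p \mapsto F(|p|)$: the flat plateau of $F$ on $[1,2]$ produces the unit jump in $\rho$ at $a=1$, and a careless calculation would either make $u \equiv 0$ appear to fail the upper bound (by missing that $F(3/2) = 1$) or make the integral obstruction miss by exactly the plateau length. Once $\rho$ is correctly identified, the remaining work is a mechanical piecewise integration, together with the standard fact that for a coercive quasi-convex Hamiltonian almost-everywhere Lipschitz subsolutions coincide with viscosity subsolutions, which justifies the pointwise manipulation of $u'$.
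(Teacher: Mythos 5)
Your proof is correct, but it takes a genuinely different route from the paper. The paper's proof is purely constructive: it exhibits an explicit periodic function $v$ with $v_x=1/2+V$ on $(0,2s)$ and $v_x\in[-1/2,1/2]$ on $(2s,1)$, checks that $F(|3/2+v_x|)-V=1$ almost everywhere (using that $3/2+v_x$ lands either in $[2,2+s]$ or in the flat plateau $[1,2]$), and concludes $\ol{H}(3/2)=1$ from the uniqueness of the ergodic constant; the viscosity conditions at the kinks are absorbed by the flatness of $F$ on $[1,2]$. You instead never produce a solution: you get $\ol{H}(3/2)\le 1$ from the subsolution $u\equiv 0$, and $\ol{H}(3/2)\ge 1$ from the sublevel-set radius $\rho$ and the integral obstruction $3/2\le\int_0^1\rho(c+V)\,dx$, whose case analysis is correct (your value $\int_0^1\rho(c+V)\,dx=3c-2+s^2+2s$ for $c\in(1-s,1)$ checks out, and the condition $s^2+2s<1/2$ is a legitimate reading of ``$s$ sufficiently small''). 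What the paper's approach buys is economy and reuse: essentially the same explicit profile reappears as the limit $u^0$ in the discounted analysis of Proposition \ref{thm:flat-flat}. What your approach buys is robustness: it avoids verifying the supersolution property at gradient jumps, makes the smallness of $s$ quantitative, and isolates exactly where the plateau of $F$ enters (the unit jump of $\rho$ at $a=1$). Two small points to tighten: the characterization $\ol{H}=\min\{c:\exists\text{ periodic viscosity subsolution at level }c\}$ holds for general coercive Hamiltonians but is not stated in the paper, so you should justify it (it follows by comparing $u^\ep$ with $w-\|w\|_\infty-c/\ep$ in the discounted equation and letting $\ep\to0$); and in your first subcase the strict inequality $c+V(x)<1$ can fail at the single point $x=s$ when $c=1-s$, which is harmless for the integral but should be phrased as an almost-everywhere statement.
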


\begin{proof}
Choose a function $v \in C(\T)$ such that
\[
\begin{cases}
v_x= 1/2+ V \quad &\text{in} \ (0,2s),\\
v_x \in [-1/2,1/2] \quad &\text{in} \ (2s,1),\\
\int_0^1 v_x\,dx=0.
\end{cases}
\]
Clearly, $v$ is a viscosity solution to
\begin{equation}\label{cell:3-2}
F\left(\left|\frac{3}{2}+v_x\right|\right) - V(x)=1 \quad \text{in} \ \T.
\end{equation}
Thus,  $\ol{H}(3/2)=1$.
\end{proof}

Next, we define the corresponding maximal subsolutions.
First, we fix a vertex $y \in \T$ and set
\begin{equation}\label{max-sub}
S(x,y)=\sup\{ w(x)- w(y)\,:\, w \ \text{is a subsolution of \eqref{cell:3-2}}\}.
\end{equation}
Clearly, $S(y,y)=0$, $x\mapsto S(x,y)$ is a subsolution of \eqref{cell:3-2} in the whole torus, $\T$, and it is 
is a solution of \eqref{cell:3-2} in $\T \setminus \{y\}$.
Because $S(\cdot,y)$ is the largest subsolution $w$ of \eqref{cell:3-2} satisfying $w(y)=0$, we call it the maximal subsolution with vertex $y$.

 In the conference 
``New connections between dynamical systems and PDEs" at the American Institute
of Mathematics in 2003, Sergey Bolotin posed the following question (see \cite{Bo},    question 12 in the list of open problems):
\begin{quest}
Does there exist $y \in \T$ such that
$x \mapsto S(x,y)$ is a solution of \eqref{cell:3-2} in $\T$?
\end{quest}
{
The answer to the preceding question was found to be yes if $H$ is strictly quasiconvex   (see \cite{FS}). 
}
For the general nonconvex case,  this question has remained open.
Here, we answer no to this question (see also \cite[Example 12.7]{Fa-survey}).
More precisely, we offer the following proposition.

\begin{prop}\label{prop:A-empty}
For all $y \in \T$, $S(\cdot,y)$ is not a solution of \eqref{cell:3-2} in  $\T$.
\end{prop}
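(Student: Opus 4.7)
The plan is to show that, for every $y \in \T$, the maximal subsolution $S(\cdot,y)$ fails to be a viscosity supersolution at the point $y$ itself. Since a solution must be both sub- and supersolution, this will complete the proof. The key is that the plateau of $F$ on $[1,2]$ enlarges the subdifferential $D^- S(\cdot,y)(y)$ enough to include a slope $p$ for which the supersolution inequality $F(|\tfrac{3}{2}+p|)-V(y)\ge 1$ fails.

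First, I would reformulate the subsolution condition concretely. Because $F$ is nondecreasing on $[0,\infty)$ with $F(2+r)=1+r$ for $r\ge 0$, the inequality $F(|\tfrac{3}{2}+w_x|)-V(x)\le 1$ is equivalent to the a.e.\ bound $-B(x)\le w_x\le A(x)$, where $A(x):=\tfrac{1}{2}+V(x)$ and $B(x):=\tfrac{7}{2}+V(x)$. The constant function $w\equiv 0$ is a subsolution (as $F(\tfrac{3}{2})-V=1-V\le 1$), so $S(\cdot,y)\ge 0$ and $S(y,y)=0$.

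Next, for each $y$, I would construct an explicit subsolution $\tilde w$ that is linear of slope $-\tfrac{3}{2}$ on a short arc to the left of $y$. Fix a small $\delta\in(0,1/4)$ and define $\tilde w$ on $\T$ by $\tilde w(y)=0$, $\tilde w_x\equiv -\tfrac{3}{2}$ on $[y-\delta,y]$, and $\tilde w_x\equiv (3\delta)/(2(1-\delta))$ on the complementary arc $[y,y+1-\delta]$. Both slopes lie in $[-B(x),A(x)]$ because $V\ge 0$ forces $B\ge\tfrac{7}{2}$ and $A\ge\tfrac{1}{2}$, so $\tilde w$ is a Lipschitz a.e.\ subsolution (hence a viscosity subsolution), and the two pieces close up since the chosen slopes make $\int_{\T}\tilde w_x=0$. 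On $[y-\delta,y]$ we have $\tilde w(x)=-\tfrac{3}{2}(x-y)$, so $S(x,y)\ge\tilde w(x)=-\tfrac{3}{2}(x-y)$ there. Combining this with $S(\cdot,y)\ge 0$ on the opposite side (where $-\tfrac{3}{2}(x-y)\le 0$) yields $S(x,y)\ge -\tfrac{3}{2}(x-y)$ in a full neighborhood of $y$. Therefore $-\tfrac{3}{2}\in D^- S(\cdot,y)(y)$.

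Finally, I would invoke this in the supersolution condition: if $S(\cdot,y)$ were a supersolution at $y$, then choosing $p=-\tfrac{3}{2}$ would give
\[
F\bigl(|\tfrac{3}{2}+(-\tfrac{3}{2})|\bigr)-V(y)=F(0)-V(y)=-V(y)\ge 1,
\]
which is impossible since $V\ge 0$. Hence $S(\cdot,y)$ is not a viscosity solution of \eqref{cell:3-2} on $\T$. The main obstacle is the construction of $\tilde w$: one must balance the slope $-\tfrac{3}{2}$ on the short arc against a slope on the long arc that is small enough to remain in $[-B,A]$ while enforcing periodicity. Once this is in place, the failure of the supersolution condition is forced by the plateau of $F$, which makes $p=-\tfrac{3}{2}$ an admissible subdifferential slope producing $F(|\tfrac{3}{2}+p|)=0<1$.
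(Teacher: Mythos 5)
Your proof is correct and follows essentially the same route as the paper: both construct an explicit piecewise-linear periodic subsolution whose slope to the left of $y$ forces $-\tfrac{3}{2}\in D^-S(\cdot,y)(y)$ (the paper uses slope $-\tfrac72$ on $(y-\tfrac18,y)$ and $\tfrac12$ elsewhere, giving $[-\tfrac72,\tfrac12]\subset D^-S(y,y)$), and then both observe that the supersolution inequality fails at $y$ because $F\bigl(\bigl|\tfrac32-\tfrac32\bigr|\bigr)-V(y)=-V(y)<1$.
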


\begin{proof}
Fix $y \in \T$. Let $w:\T \to \R$ be a function such that $w(y)=0$ and 
\begin{equation}\label{A-w}
w_x(x)=
\begin{cases}
-\frac{7}{2} \qquad &\text{for} \ x \in \left(y-\frac{1}{8},y \right),\\
\frac{1}{2} \qquad &\text{for} \ x \in \left(y,y+\frac{7}{8}\right).
\end{cases}
\end{equation}
It is straightforward that $w$ is a subsolution of \eqref{cell:3-2} in the almost everywhere sense.  Hence, it is a  viscosity subsolution.
Therefore, $S(x,y) \geq w(y)$ for all $x \in \T$. In particular, this implies that
\begin{equation}\label{sub-S}
\left[-\frac{7}{2}, \frac{1}{2}\right] \subset D^- S(y,y).
\end{equation}
Next, we select $q=-\frac{3}{2} \in D^- S(y,y)$ and notice that
\[
F\left(\left|\frac{3}{2}+q\right|\right) - V(y) \leq F(0) = 0 <1.
\]
Consequently, $S(\cdot,y)$ is not a supersolution of \eqref{cell:3-2} at $y$.
\end{proof}

We observe that the maximal subsolution, $S(x,y)$, can be computed explicitly although we do not need this computation here. 
\begin{rem}
{
We recall that we can define the Aubry set for strictly quasiconvex Hamilton--Jacobi equations as 
the set of all points, $y$, such that $S(\cdot,y)$ is a solution on $\T^n$
(see \cite{FS} for the details).
Proposition \ref{prop:A-empty} implies that if we define the Aubry set 
in the same way, it is empty. 
However, this does not contradict the results in \cite{FS} as the Hamiltonian of the example in this section violates an assumption of the strictly quasiconvexity. 
This fact indeed highlights} a significant difference between convex and non-convex cases.  Therefore, if an analog of the Aubry set exists, it has to be defined in a different way. In the  general non-convex case, we can construct Mather measures  \cite{CGT1}
using the nonlinear adjoint method.
When the Hamiltonian is strictly
quasiconvex, 
these measures
are invariant under the Hamiltonian flow. 
Moreover, the Mather measures are supported in a subset of the Aubry set called the Mather set. This, of course, cannot hold if the Aubry set is empty.  Besides, in the general non-convex case, Mather measures may not be invariant under the Hamiltonian flow, and   the loss of invariance is encoded in dissipation measures that record the gradient jump structure \cite{CGT1}.
\end{rem}

\subsection{Discounted approximation at $P=3/2$}
Finally, we consider the discounted approximation problem for $P=3/2$. 
\begin{equation}\label{eq:flat}
\ep u^\ep + F\left(\left|\frac{3}{2}+u^\ep_x\right|\right) - V(x)=0 \quad \text{in} \ \T.
\end{equation}

\begin{prop}\label{thm:flat-flat}
There exists a solution of \eqref{cell:3-2},  $u^0 \in C(\T)$, such that
\[
\lim_{\ep \to 0} \left( u^\ep + \frac{1}{\ep} \right) = u^0 \quad \text{in} \ C(\T).
\]
\end{prop}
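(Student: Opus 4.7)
The plan is to establish uniform estimates, extract a subsequential limit that solves the cell problem, and then uniquely pin down this limit by exploiting the one-dimensional structure together with the explicit form of $V$.

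First, I would set $v^\ep := u^\ep + 1/\ep$, so that \eqref{eq:flat} becomes
\[
\ep v^\ep + F\!\left(\left|\tfrac{3}{2}+v^\ep_x\right|\right) = 1 + V(x) \quad\text{in }\T.
\]
The coercivity of $F$ and the standard Lipschitz bound for coercive first-order equations yield $\|v^\ep_x\|_{L^\infty}\le C$ uniformly in $\ep$. To bound $v^\ep$ in $L^\infty$, I fix any cell-problem solution $w$ produced by Lemma \ref{lem:flat-1} and compare $u^\ep$ to the subsolution $w-\max w -1/\ep$ and the supersolution $w-\min w -1/\ep$ of \eqref{eq:flat}; this gives $\|v^\ep\|_{L^\infty}\le \osc(w)$. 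By Arzel\`a--Ascoli, for any sequence $\ep_j\to 0$ there is a subsequence (not relabelled) and $u^0\in C(\T)$ with $v^{\ep_j}\to u^0$ uniformly, and a standard viscosity argument shows that $u^0$ solves \eqref{cell:3-2}.

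Next, I would constrain the structure of every subsequential limit by a barrier argument modelled on Proposition \ref{prop:apriori}. Since $P=3/2>0$ and $\ol{H}(3/2)=1>0$, applying a test function of the form $x\mapsto v^\ep(x)+\tfrac{3}{2}x-|x-z|$ at a hypothetical bad point $z$ at which $\tfrac{3}{2}+v^\ep_x(z)<1$ produces an interior minimum $x_0$; at $x_0$ the viscosity supersolution property, together with $\ep v^\ep\to 0$ and $\max V<1$, yields a contradiction provided $V(x_0)>0$. Consequently $\tfrac{3}{2}+v^\ep_x\ge 1$ in the viscosity sense on a neighborhood of the support of $V$ for all sufficiently small $\ep$. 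Hence on $(0,2s)$ the limiting equation $F(\tfrac{3}{2}+u^0_x)=1+V(x)$ together with $\tfrac{3}{2}+u^0_x\ge 1$ forces $\tfrac{3}{2}+u^0_x=2+V(x)$, so that $u^0_x=\tfrac{1}{2}+V$ is uniquely determined on $(0,2s)$ and in particular
\[
u^0(2s)-u^0(0)=\int_0^{2s}\!\bigl(\tfrac{1}{2}+V(x)\bigr)\,dx = s+s^2.
\]

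It remains to identify $u^0$ on the flat interval $(2s,1)$, where $V\equiv 0$ and the cell problem allows $\tfrac{3}{2}+u^0_x\in[1,2]$ with no further constraint from \eqref{cell:3-2} alone. The plan here is to construct the candidate $\widetilde u^0$ whose slope on $(2s,1)$ is the constant dictated by periodicity, namely
\[
c \;=\; -\frac{s+s^2}{1-2s}\;\in\;\bigl[-\tfrac{1}{2},\tfrac{1}{2}\bigr]
\]
for $s$ sufficiently small, and then to prove that every subsequential limit $u^0$ coincides with $\widetilde u^0$ by a two-sided comparison: small piecewise-linear perturbations of $\widetilde u^0$ on $(2s,1)$ remain cell-problem sub- and supersolutions, and lifting them by a shift in $1/\ep$ produces sub- and supersolutions of the discounted problem \eqref{eq:flat}, sandwiching $v^\ep$ between $\widetilde u^0 \pm o(1)$.

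The main obstacle is precisely this uniqueness on the flat region. Because $F$ fails to be strictly quasi-convex there, Theorem \ref{thm:quasi} does not apply directly, and by Proposition \ref{prop:A-empty} the classical Aubry-set selection mechanism is unavailable: no vertex $y$ produces a global solution $S(\cdot,y)$ that could play the role of a selected limit. If the direct comparison sketched above were to fall short, a backup route I would pursue is to approximate $F$ by strictly quasi-convex functions $F_\del$ on $(1,2)$ (for instance $F_\del(q)=F(q)+\del(q-\tfrac{3}{2})^2$, smoothly matched outside that interval so that monotonicity is preserved), apply Theorem \ref{thm:quasi} for each $\del>0$ to obtain a selected limit $u^{0,\del}$ of the corresponding discounted problems, and then pass to $\del\to 0$ using the uniform Lipschitz bound and the one-dimensional structure to identify $\lim_{\del\to 0}u^{0,\del}$ with $\widetilde u^0$, thus concluding full convergence.
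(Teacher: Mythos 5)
There is a genuine gap, and in fact the limit you propose is not the correct one. The paper proves the proposition by an entirely explicit construction: it solves the ODE $v^\ep_x=\tfrac12+V-\ep v^\ep$ on $(0,a^\ep)$ (upper branch $|\tfrac32+v^\ep_x|\ge 2$), then $v^\ep_x=-\tfrac12+V-\ep v^\ep$ on $(a^\ep,b^\ep)$ (lower branch $|\tfrac32+v^\ep_x|\le 1$), then $v^\ep\equiv 0$ on $(b^\ep,1)$, verifies the viscosity conditions at the three gradient jumps (trivial because $F\equiv 1$ on $[1,2]$), and concludes by uniqueness for the discounted equation. Passing to the limit gives $u^0$ with $(u^0)'=\tfrac12+V$ on $(0,2s)$, $(u^0)'=-\tfrac12$ on $(2s,b)$ with $b=4s+2s^2$, and $u^0\equiv 0$ on $[b,1]$. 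This is \emph{not} your candidate $\widetilde u^0$ with constant slope $c=-(s+s^2)/(1-2s)$ on $(2s,1)$: the cell problem on the flat region admits every slope in $[-\tfrac12,\tfrac12]$, so periodicity alone does not dictate the slope there, and the discounted approximation actually selects the steepest descent $-\tfrac12$ followed by the plateau $0$ rather than the averaged slope. Your sandwiching step cannot rescue this, because shifting \emph{any} cell solution $w$ by $-1/\ep-\max w$ and $-1/\ep-\min w$ gives sub- and supersolutions of \eqref{eq:flat}, so comparison only yields $w-\max w\le v^\ep\le w-\min w$ for every such $w$ and distinguishes nothing among them.

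Two further steps are also unsound. The barrier argument claiming $\tfrac32+v^\ep_x\ge 1$ near the support of $V$ fails: at the touching point $x_0$ one only gets $\ep v^\ep(x_0)\ge V(x_0)$, which is no contradiction unless one can locate $x_0$ where $V$ exceeds $\ep v^\ep$, and indeed the true solution satisfies $\tfrac32+v^\ep_x<1$ on $(a^\ep,2s)$, a nonempty subinterval of $\{V>0\}$ (the conclusion $u^0_x=\tfrac12+V$ on $(0,2s)$ survives only because $a^\ep\to 2s$). The backup route of strictly quasi-convex approximations $F_\del$ is also unjustified: there is no reason the $\del\to 0$ limit of the selected solutions for $F_\del$ coincides with the selected solution for $F$, and a perturbation penalizing $|q-\tfrac32|$ would bias the flat region toward slope $0$, again not toward $\widetilde u^0$. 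The honest route here is the paper's: build $v^\ep$ by hand and read off the limit.
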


\begin{proof}
Let $v^\ep = u^\ep +1/\ep$. Then, $v^\ep$ solves
\begin{equation}\label{eq:v-ep}
\ep v^\ep + F\left(\left|\frac{3}{2}+v^\ep_x\right|\right) =1 + V(x) \quad \text{in} \ \T.
\end{equation}
Next, we give an explicit construction for  $v^\ep$.

\noindent {\bf Step 1.} Set
\[
v^\ep(x) = e^{-\ep x} \int_0^x e^{\ep r} \left(\frac{1}{2} + V(r) \right)\,dr \quad \text{for} \ x \in (0,a^\ep),
\]
where $a^\ep$ is a number to be chosen such that $a^\ep \in (s,2s)$ and $v^\ep_x({a^\ep}-)=1/2$.

It is clear that
\begin{itemize}
\item[-] $v^\ep(0)=0$ and $v^\ep_x(0+)=1/2$.

\item[-] $v^\ep(x) \leq x$ and
\[
v^\ep_x(x) = \frac{1}{2}+V(x) - \ep v^\ep(x).
\]

\item[-] In particular, for $0<x<s$, we have $v^\ep_x(x) \geq 1/2$ and thus
\begin{equation}\label{eq:step1}
\ep v^\ep(x)+ F\left(\left|\frac{3}{2}+v^\ep_x\right|\right) = \ep v^\ep(x) + \left(\frac{1}{2}+v^\ep_x\right)=1+V(x).
\end{equation}

\item[-] $v^\ep$ is increasing and always $\ep v^\ep=O(\ep)$. We choose $a^\ep \in (s,2s)$ such that $\ep v^\ep(a^\ep)= V(a^\ep)$.
Then, $\lim_{\ep \to 0} a^\ep = 2s$ and $v^\ep_x(a^\ep-)=1/2$. Clearly, \eqref{eq:step1} holds for all $x\in (0,a^\ep)$.
\end{itemize}

\noindent {\bf Step 2.} Define
\[
v^\ep(x) = e^{\ep(a^\ep-x)} v^\ep(a^\ep) + e^{-\ep x} \int_{a^\ep}^x e^{\ep r} \left(-\frac{1}{2} + V(r) \right)\,dr \quad \text{for} \ x\in (a^\ep, b^\ep),
\]
where $b^\ep>2s$ is a number to be chosen later. We have that
\begin{itemize}
\item[-] $v^\ep_x({a^\ep}+)=-1/2$.

\item[-] $v^\ep$ is decreasing in $(a^\ep, b^\ep),$ and
\[
v^\ep_x(x) = -\frac{1}{2} + V(x) - \ep v^\ep(x).
\]
\item[-] We argue that, for $x\in (a^\ep,2s)$, we have $\ep v^\ep(x) \geq V(x)$. This is correct as $\ep v^\ep(a^\ep) = V(a^\ep)$ and $\ep v^\ep_x(x) \geq -1 = V'(x)$ in $(a^\ep,2s)$.
Thus, $v^\ep_x(x) \leq -1/2$ in $(a^\ep,2s)$ and also $v^\ep(2s)>0$.

\item[-] Pick $b^\ep>2s$ to be the smallest number such that $v^\ep(b^\ep)=0$. Then, $v^\ep_x(b^\ep-)=-1/2$, and, for $x\in (a^\ep, b^\ep)$, we always have $v^\ep_x(x) \leq -1/2$ and
\[
\ep v^\ep(x)+ F\left(\left|\frac{3}{2}+v^\ep_x\right|\right) = \ep v^\ep(x) + \left(\frac{3}{2}+v^\ep_x\right)=1+V(x).
\]
\end{itemize}

\noindent {\bf Step 3.} For $x \in (b^\ep,1)$, we set $v^\ep(x)=0$. As $V=0$ in $(b^\ep,1)$, we have that, for $x \in (b^\ep,1)$,
\[
\ep v^\ep(x) +F\left(\left|\frac{3}{2}+v^\ep_x\right|\right) = F\left(\frac{3}{2}\right) = 1 = 1 + V(x).
\]
From the preceding three steps of the construction, $v^\ep$ is $1$-periodic.
To check that $v^\ep$ solves \eqref{eq:v-ep}, we only need to check the definition of viscosity solutions 
at the points where there are gradient jumps.  These points are $x=0, a^\ep, b^\ep$.
As we have $F(p)=1$ for $p\in [1,2]$, the verification at these three points is obvious.

Now, we are concerned with the convergence of $v^\ep$ as $\ep \to 0$.
As discussed in Step 1, we have that $\lim_{\ep \to 0} a^\ep =2s$. 
Set $b=\lim_{\ep \to 0} b^\ep$.
We use  the explicit formula of $v^\ep$ to get that $v^\ep \to u^0$, uniformly in $\T$, where $u^0$ satisfies
\[
\begin{cases}
u^0(0)=0\\
(u^0)'(x)=\frac{1}{2} + V(x) \quad &\text{in} \ (0,2s),\\
(u^0)'(x)=-\frac{1}{2} \quad &\text{in} \ (2s,b),\\
u^0 \equiv 0 \quad &\text{on} \ [b,1].
\end{cases}
\]
Finally, we see that $u^0$ solves \eqref{cell:3-2}.
\end{proof}

\end{document}